\newtheorem{theorem}{Theorem}[section]
\newtheorem{lemma}[theorem]{Lemma}
\newtheorem{proposition}{Proposition}
\theoremstyle{definition}
\newtheorem{definition}[theorem]{Definition}
\newtheorem{remark}{Remark}
\mathchardef\emptyset="001F
\newcommand{\e}{\varepsilon}
\newcommand{\Om}{\Omega}
\newcommand{\weakst}{\stackrel{\ast}{\rightharpoonup}}
\newcommand{\R}{{\mathbb R}}
\renewcommand{\L}{{\mathcal L}}
\newcommand{\I}{{\mathcal I}}
\newcommand{\E}{{\mathcal E}}
\renewcommand{\O}{{\mathcal O}}
\newcommand{\A}{{\mathcal A}}
\newcommand{\Z}{{\mathbb Z}}
\newcommand{\N}{{\mathbb N}}
\newcommand{\M}{{\mathbb M}}
\renewcommand{\H}{{\mathcal H}}
\newcommand{\Mnn}{\M^{N\times N}}
\newcommand{\var}{\varphi}
\newcommand{\dist}{\mathrm{dist}}
\newcommand{\co}{{\rm co}}
\newcommand{\ut}{\tilde u}
\newcommand{\w}{\mathrm{w}}
\newcommand{\wu}{\mathrm{v}}
\renewcommand{\u}{\mathrm{u}}
\newcommand{\fcc}[1]{^{\mathrm{F}#1}}
\newcommand{\hcp}[1]{^{\mathrm{H}#1}}
\newcommand{\bcc}[1]{^{\mathrm{B}#1}}
\newcommand{\dc}[1]{^{\mathrm{D}#1}}
\newcommand{\honey}{^{\text{\davidsstar}}} 
\newcommand{\ga}{\gamma}
\newcommand{\Ga}{\Gamma}
\newcommand{\LL}{\L}
\newcommand{\TT}{\mathcal T}
\newcommand{\NN}[2]{#1,\,#2\ {\rm NN}} 
\newcommand{\NNN}[2]{#1,\,#2\ {\rm NNN}}
\newcommand{\be}{\begin{equation}}
\newcommand{\ee}{\end{equation}}
\newcommand{\bes}{\begin{equation*}}
\newcommand{\ees}{\end{equation*}}
\newcommand{\bea}{\begin{eqnarray}}
\newcommand{\eea}{\end{eqnarray}}
\newcommand{\beas}{\begin{eqnarray*}}
\newcommand{\eeas}{\end{eqnarray*}}
\renewcommand{\d}{\mathrm{d}}
\newcommand{\D}{\nabla} 
\newcommand{\eps}{\varepsilon}
\renewcommand{\mod}[1]{\left|#1\right|}
\title[Rigidity of three-dimensional lattices] 
      {Rigidity of three-dimensional lattices \\ and dimension reduction in \\ heterogeneous nanowires}
\author[G. Lazzaroni, M. Palombaro and A. Schl\"omerkemper]{}
 \email{giuliano.lazzaroni@sissa.it}
 \email{M.Palombaro@sussex.ac.uk}
 \email{anja.schloemerkemper@mathematik.uni-wuerzburg.de}
\begin{document}
\maketitle

\centerline{\scshape Giuliano Lazzaroni}
\medskip
{\footnotesize
 \centerline{SISSA}
   \centerline{Via Bonomea 265}
   \centerline{34136 Trieste, Italy}
} 

\medskip

\centerline{\scshape Mariapia Palombaro}
\medskip
{\footnotesize
 \centerline{University of Sussex}
   \centerline{Department of Mathematics}
   \centerline{Pevensey 2 Building}
   \centerline{Falmer Campus}
   \centerline{Brighton BN1 9QH, United Kingdom}
 }
 \medskip

\centerline{\scshape Anja Schl\"omerkemper}
\medskip
{\footnotesize
  \centerline{University of W\"urzburg}
   \centerline{Institute of Mathematics}
   \centerline{Emil-Fischer-Stra\ss{}e 40}
   \centerline{97074 W\"urzburg, Germany}
}

\bigskip


\begin{abstract}
In the context of nanowire heterostructures we perform a discrete to continuum limit of the corresponding free energy by means of $\Ga$-convergence techniques. Nearest neighbours are identified by employing the notions of Voronoi diagrams and Delaunay triangulations. The scaling of the nanowire is done in such a way that we perform not only a continuum limit but a dimension reduction simultaneously. The main part of the proof is a discrete geometric rigidity result that we announced in an earlier work and show here in detail for a variety of three-dimensional lattices. We perform the passage from discrete to continuum twice: once for a system that compensates a lattice mismatch between two parts of the heterogeneous nanowire without defects and once for a system that creates dislocations. It turns out that we can verify the experimentally observed fact that the nanowires  show dislocations when the radius of the specimen is large.
\end{abstract}

\section*{Introduction}
Rigidity results of elastic materials have been of great interest in mathematical continuum mechanics in recent years, in particular since the seminal work by Friesecke, James and M\"uller \cite{fjm}.
Such results yield a deeper insight into the properties  of materials through an estimate of the distance of the deformation gradient from the set of rotations; this distance is in turn estimated from above by the free energy of the system.

The rigidity estimates turn out to be crucial steps in various proofs as for instance of $\Gamma$-convergence results in the context of dimension reduction. This was also the case in our earlier paper \cite{LaPaSc2014}, in which we derived a discrete to continuum limit and a dimension reduction of an energy of a heterogeneous nanowire (see \cite{LPSpro} for an abridged version). There we presented a detailed analysis of the passage from the two-dimensional setting to the one-dimensional limit, and we gave a summary of the corresponding dimension reduction from three dimensions to one dimension. The purpose of this article is to show the rigidity estimates (Section~\ref{sec:rigidity}) and the main features of the latter case in detail.

Further, we elaborate on various three-dimensional lattices that are of importance in applications: the face-centred cubic lattice, the hexagonal close-packed, the body-centred cubic lattice and the diamond cubic lattice, see Section~\ref{3d}. These lattices occur for instance in aluminum and gold,  magnesium and zinc, iron and tungsten, and germanium and silicon, respectively. Note that Si/Ge nanowires have applications in the semiconductor optoelectronics \cite{Kavanagh,Schmidtetal2010}.
In Section~\ref{sec:rigidity} we show that our discrete rigidity result applies to all these lattice structures.
The main property of such lattices is their geometric rigidity: they define a tessellation of the space into rigid polyhedra whose edges correspond to bonds in the lattice.
Our approach does not work in non-rigid lattices, like a simple cubic crystal with nearest-neighbour interactions only.
\par
We are interested in the mathematical modeling of dislocations in heterogeneous nano\-wires.
We assume that the material consists of two parts with the same lattice structure but different lattice constants. The interface between the two parts is assumed to be flat. The material overcomes the lattice mismatch either defect-free or by creating dislocations. As was pointed out by Ertekin et al. \cite{egcs}, it is the radius of the nanowire which determines whether the material creates dislocations or is defect free. In our model the radius roughly corresponds to the number of layers of atoms parallel to the direction of the wire, see Section \ref{sec:not}.
We prove that it is energetically more favourite to create dislocations than to relieve the mismatch in a defect-free way if the thickness of the nanowire is sufficiently large (see Remark~\ref{finalrem}).

The underlying idea of our mathematical model, which we introduce in Section~\ref{sec:not} in detail, goes back to the variational model proposed in  \cite{mp} in the context of nonlinear elasticity
and which was later generalized
 to a discrete to continuum setting in \cite{LaPaSc2014}. As before we assume that the total energy only consists of nearest-neighbour interactions which are harmonic, though it is possible to generalize this as discussed in \cite[Section~4]{LaPaSc2014}. In order to be able to apply a rigidity estimate, we always impose a non-interpenetration condition, which ensures that the deformations of the discrete setting preserve the orientation of each cell; similar assumptions were made e.g.\ in \cite{BSV,FT}. The non-interpenetration assumption can be dropped if one takes into account interactions beyond nearest neighbours, see the recent work \cite{ALP}. It is worth mentioning that a related variational model for misfit dislocations has been recently proposed in \cite{FPP}.

As in \cite{LaPaSc2014} we distinguish the systems with and without defects already in the given reference configuration. For both such systems we study the corresponding free energy of nearest- neighbour interactions in a discrete to continuum limit with dimension reduction. For the definition of the nearest neighbours in the discrete settings close to the interface it is useful to work with the notion of Delaunay triangulations and Voronoi cells, see \cite{LaPaSc2014}, where this was introduced for the first time to describe configurations with dislocations, see also Section~\ref{3d} for an introduction.

In Section~\ref{sec:4} we compare the minimizers of the limiting functionals, which characterize the minimum cost needed to compensate the lattice mismatch with and without defects, respectively.  It turns out that this cost depends on the thickness of the wire described by a mesoscale parameter $k$. More precisely, it depends quadratically on $k$ if there are dislocations, and scales like  $k^3$ if there are no defects. Hence for sufficiently large $k$, i.e. large radius of the wire, dislocations are energetically preferred. The result is based on a scaling argument. In particular for applications in semiconductor optoelectronics it would be interesting to know the threshold $k_c$ below which the nanowire deforms defect-free. This is however out of reach with our current methods so that we leave this as an open problem for future research.

\par
\section{Three-dimensional lattices}\label{3d}
We consider various three dimensional lattices whose unit cells are rigid convex polyhedra.
In this context, rigidity is understood in the following sense:
once the lengths of the edges of a polyhedron are given,
then the polyhedron is determined up to rotations and translations,
under the assumption that the polyhedron itself is convex.
We recall that a convex polyhedron is rigid if and only if its facets are triangles,
according to the classical Cauchy Rigidity Theorem (see, e.g., \cite{handbook-geometry}).
We consider four types of discrete lattices in dimension three:
the face-centred cubic, the hexagonal close-packed, the body-centred cubic,
and the diamond cubic. They should be interpreted as prototypes to which our approach can be applied,
under slight modifications in each case. For a general overview on lattice structures see, e.g., \cite{Grosso}.

\par
All the lattices we will introduce, fulfil a property of rigidity.
Indeed, the corresponding nearest-neighbour bonds provide a tessellation of the space
into rigid convex polyhedra, as we will make precise case by case.
(In the diamond cubic, also next-to-nearest neighbours will be used.)
We always assume a non-interpenetration condition, see \eqref{ad-3d} below.
\par
A major role in modeling is then played by the choice of the
nearest neighbours of each lattice.
Here they are defined according to the notion of
\emph{Delaunay pretriangulation}, as given in the following definitions.
Such a general definition can be applied also when the lattice is irregular,
so in particular across the interface between the phases, see Section \ref{subsec:setting}.
\par
For later convenience we give the definition for all dimensions $N\ge2$.
Let $\LL\subset\R^N$ be a countable set of points such that there exist $R,r>0$ with
$\inf_{x\in\R^N} \# \big(\LL\cap B(x,R)\big)\ge1$ and
$|x-y|\ge r$ for every $x,y\in\LL$, $x\neq y$,
where $B(x,R):=\{y\in\R^N\colon |x-y|<R\}$.
\begin{definition}[Voronoi cells] \label{def:Voronoi}
The Voronoi cell of a point $x\in\LL$ is the set
$$
C(x):= \{ z\in\R^N\colon |z-x|\le|z-y| \ \forall\, y\in\LL \} \,.
$$
The Voronoi diagram associated with $\LL$ is the partition $\{C(x)\}_{x\in\LL}$.
\end{definition}
\begin{definition}[Delaunay pretriangulation]\label{def:Del-pre}
The Delaunay pretriangulation associated with $\LL$ is a partition of $\R^N$ in open nonempty hyperpolyhedra with vertices in $\LL$,
such that two points $x,y\in\LL$ are vertices of the same hyperpolyhedra if and only if $C(x)\cap C(y)\neq\emptyset$.
\end{definition}
\begin{definition}[Nearest neighbours]\label{def:NN}
Two points $x,y\in\LL$, $x\neq y$, are said to be nearest neighbours (and we write: $\NN{x}{y}$)
if they are vertices of an edge of one of the hyperpolyhedra of the Delaunay pretriangulation.
\end{definition}
\begin{definition}[Next-to-nearest neighbours]\label{def:NNN}

Two points $x,y\in\LL$, $x\neq y$, are said to be next-to-nearest neighbours
(and we write: $\NNN{x}{y}$)
if, setting
$$
\LL_*(x):=\LL\setminus\{y\colon \NN{x}{y}\}\,,
$$
we find that  $\H^{N-1}(C^x_*(x)\cap C^x_*(y))>0$, where $\{C^x_*(y)\}_{y\in\LL_*(x)}$ is
the Voronoi diagram associated with $\LL_*(x)$.
\end{definition}
The Voronoi diagram and the Delaunay pretriangulation
associated with a lattice are unique.
For these and other properties we refer to \cite[Section 1]{LaPaSc2014} and references therein.
\subsection{FCC lattice}
The face-centred cubic lattice is the typical structure of metals such as aluminium, gold, nickel, and platinum.
It is the Bravais lattice generated by the vectors
$$
\wu\fcc{}_1:=\sqrt2(1,0,0) \,, \quad
\wu\fcc{}_2:=\sqrt2\big(\tfrac12,\tfrac12,0\big) \,, \quad
\wu\fcc{}_3:=\sqrt2\big(0,\tfrac12,\tfrac12\big) \,,
$$
namely
$$
\L\fcc{}:= \{ \xi_1\wu\fcc{}_1+\xi_2\wu\fcc{}_2+\xi_3\wu\fcc{}_3 \colon \ \xi_1,\xi_2,\xi_3\in\Z  \}.
$$
The resulting lattice is obtained by repeating  periodically in the space
a cubic cell of side $\sqrt2$,
where the atoms lie at the vertices and at the centre of each facet.
It is readily seen that two points $x,y\in\L\fcc{}$ are nearest neighbours in the sense of Definition \ref{def:NN}
if and only if $\mod{x-y}=1$, i.e.,
they are joined by half a diagonal of a facet of the cubic cell.
Each atom has twelve nearest neighbours.
The Delaunay pretriangulation provides a subdivision of the space
into regular tetrahedra and octahedra of side one, thus in rigid convex polyhedra,
see Figure \ref{fig:fcc}.
Remark that the diagonals of the octahedra, whose length is $\sqrt2$,
correspond to next-to-nearest neighbours. The latter will not enter the definition of the energy
\eqref{en:fcc}.
\begin{figure}[p]
\centering
\subfloat[]{
\includegraphics[width=.24\textwidth]{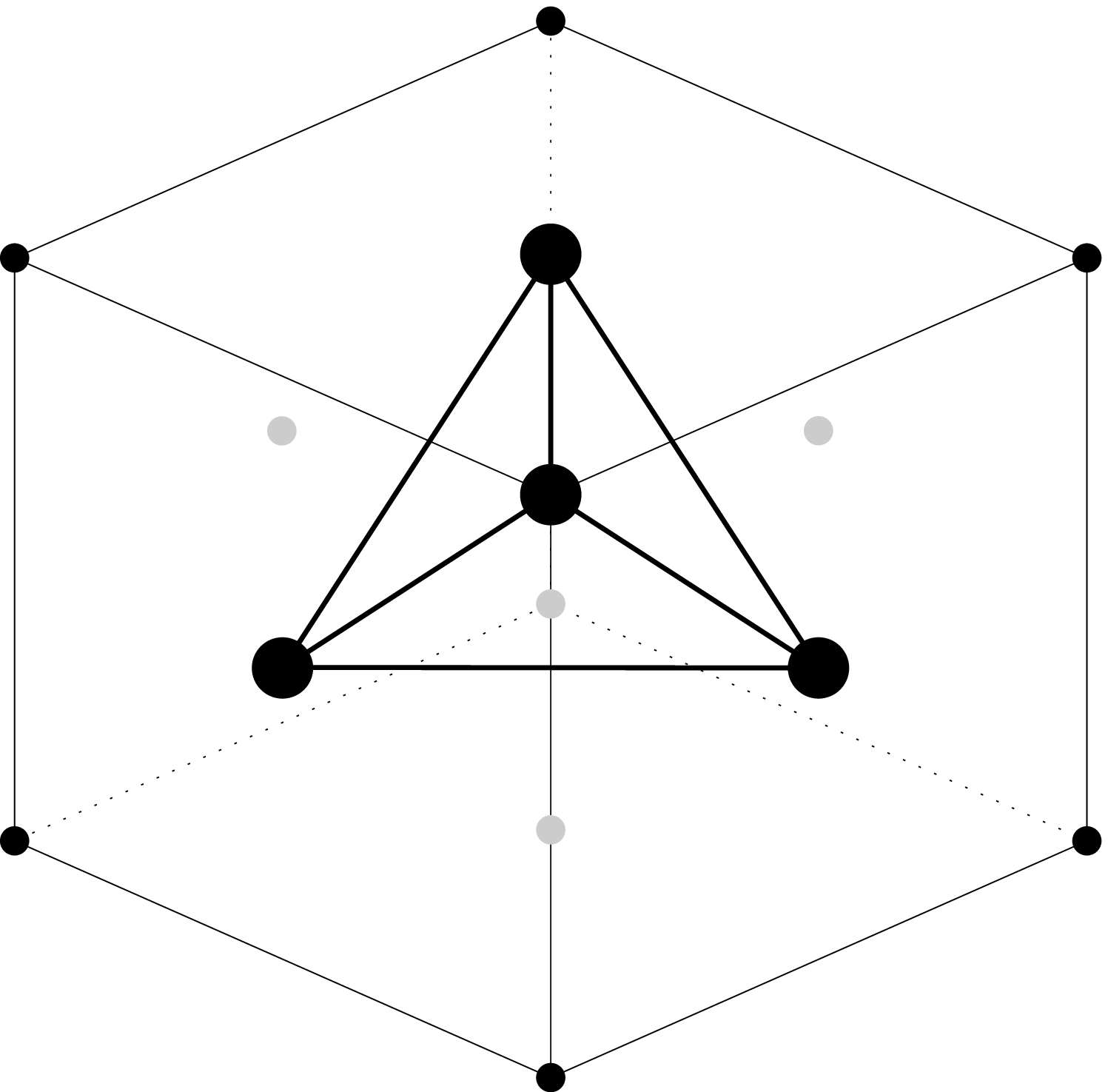}
\label{fig:subfig1}
}
\hfill
\subfloat[]{
\includegraphics[width=.24\textwidth]{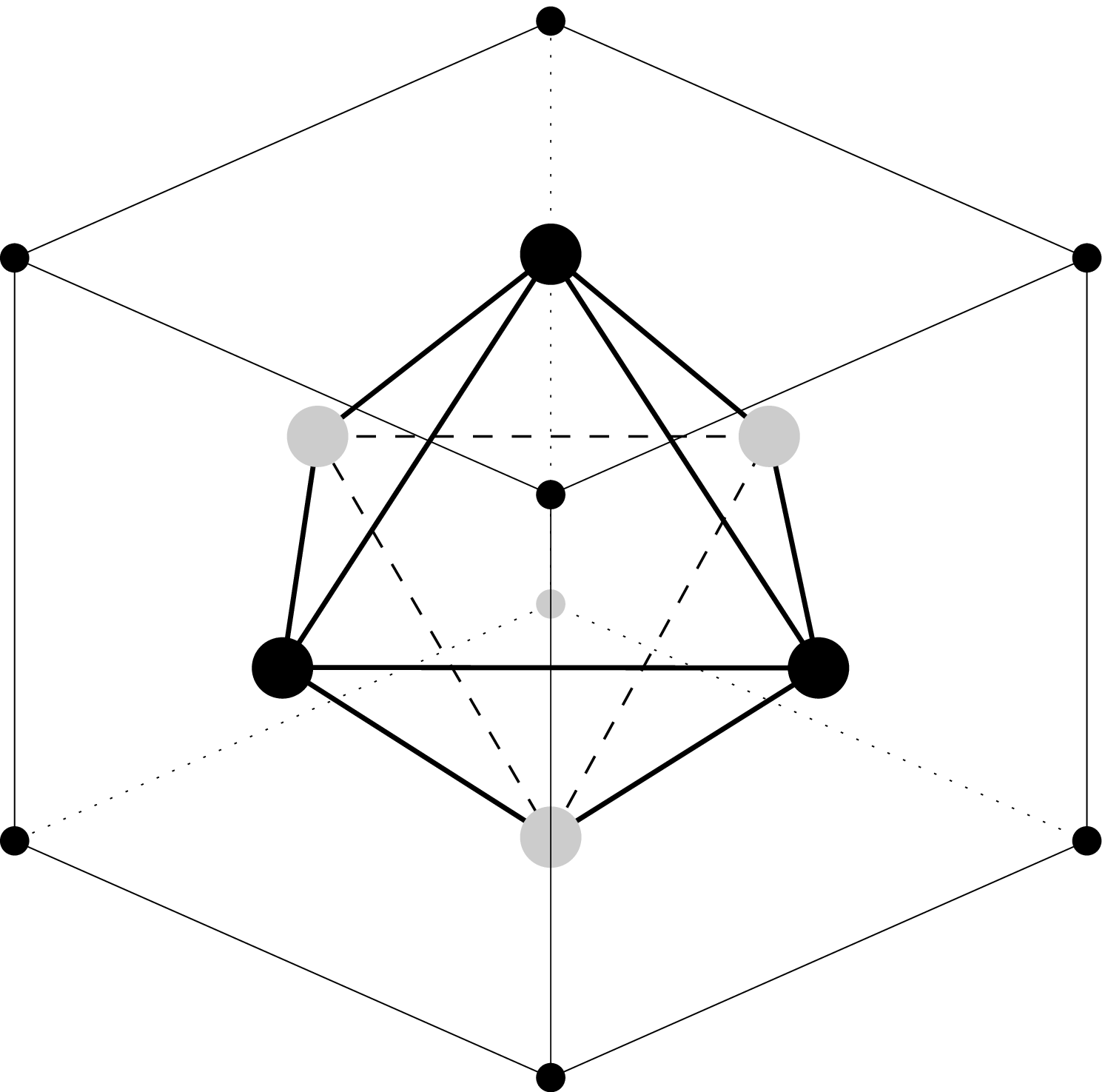}
\label{fig:subfig3}
}
\hfill
\subfloat[]{
\includegraphics[width=.24\textwidth]{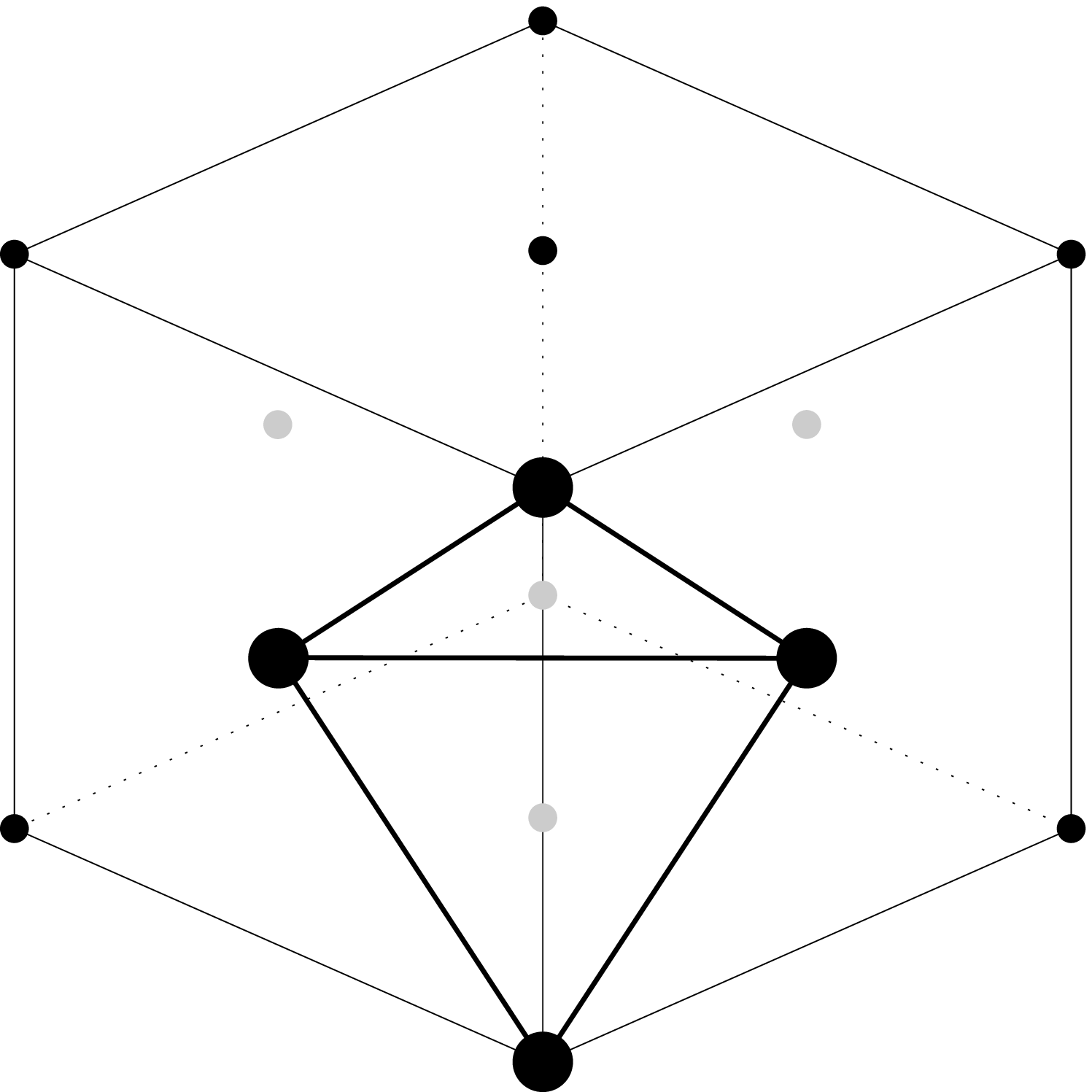}
\label{fig:subfig2}
}
\caption{In the face-centred cubic lattice
the nearest-neighbour structure of the atoms provides a subdivision of the space into tetrahedra (a) and octahedra (b). Figure~(c) shows a quarter of an octahedron in the same unit cell.
Grey dots denote points lying on the hidden facets.
}\label{fig:fcc}
\end{figure}
\subsection{HCP lattice}
Our approach works also for non-Bravais lattices such as
the hexagonal close-packed structure found in some metals as, e.g., magnesium and zinc.
It is defined by
$$
\L\hcp{}:=\{\u\hcp{}_i +  \xi_1\wu\hcp{}_1+\xi_2\wu\hcp{}_2+\xi_3\wu\hcp{}_3 \colon \ \xi_1,\xi_2,\xi_3\in\Z \,, \ i=1,2\} \,,
$$
where
$$
\wu\hcp{}_1:=\big(0,0,\tfrac{2\sqrt{6}}{3}\big)\,, \quad \wu\hcp{}_2:=\big(\tfrac{1}{2},\tfrac{\sqrt{3}}{2},0\big)\,,\quad
\wu\hcp{}_3:=\big(-\tfrac{1}{2},\tfrac{\sqrt{3}}{2},0\big)
$$
are generators of two sublattices and
$$
\u\hcp{}_1:=(0,0,0)\,,\quad
\u\hcp{}_2:=\big(0,\tfrac{\sqrt{3}}{3}, \tfrac{\sqrt{6}}{3}\big)
$$
are called vectors of the basis.
The lattice is thus obtained by merging two Bravais sublattices (defined for $i=1$ and $i=2$, respectively).
As in the previous case, the nearest neighbours are those couples with distance one,
each atom has twelve nearest neighbours,
and the Delaunay pretriangulation consists of regular tetrahedra and octahedra of side one,
see Figure \ref{fig:hcp}. As before, the diagonals of the octahedra, which correspond to
next-to-nearest neighbour interactions, will not enter the definition of the energy \eqref{en:hcp}.
\begin{figure}[p]
\centering
\subfloat[]{
\includegraphics[width=.28\textwidth]{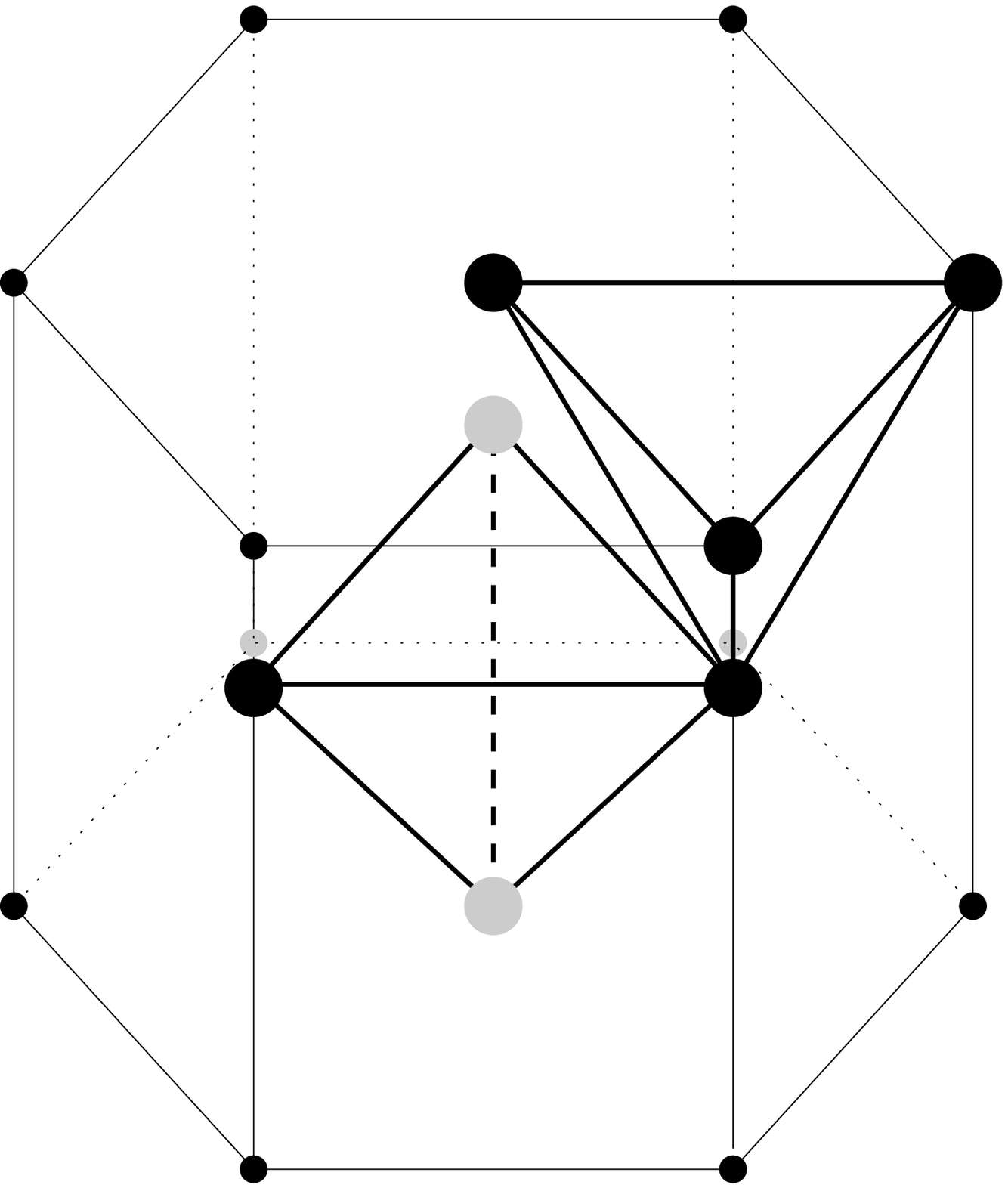}
}
\hspace{.15\textwidth}
\subfloat[]{
\includegraphics[width=.28\textwidth]{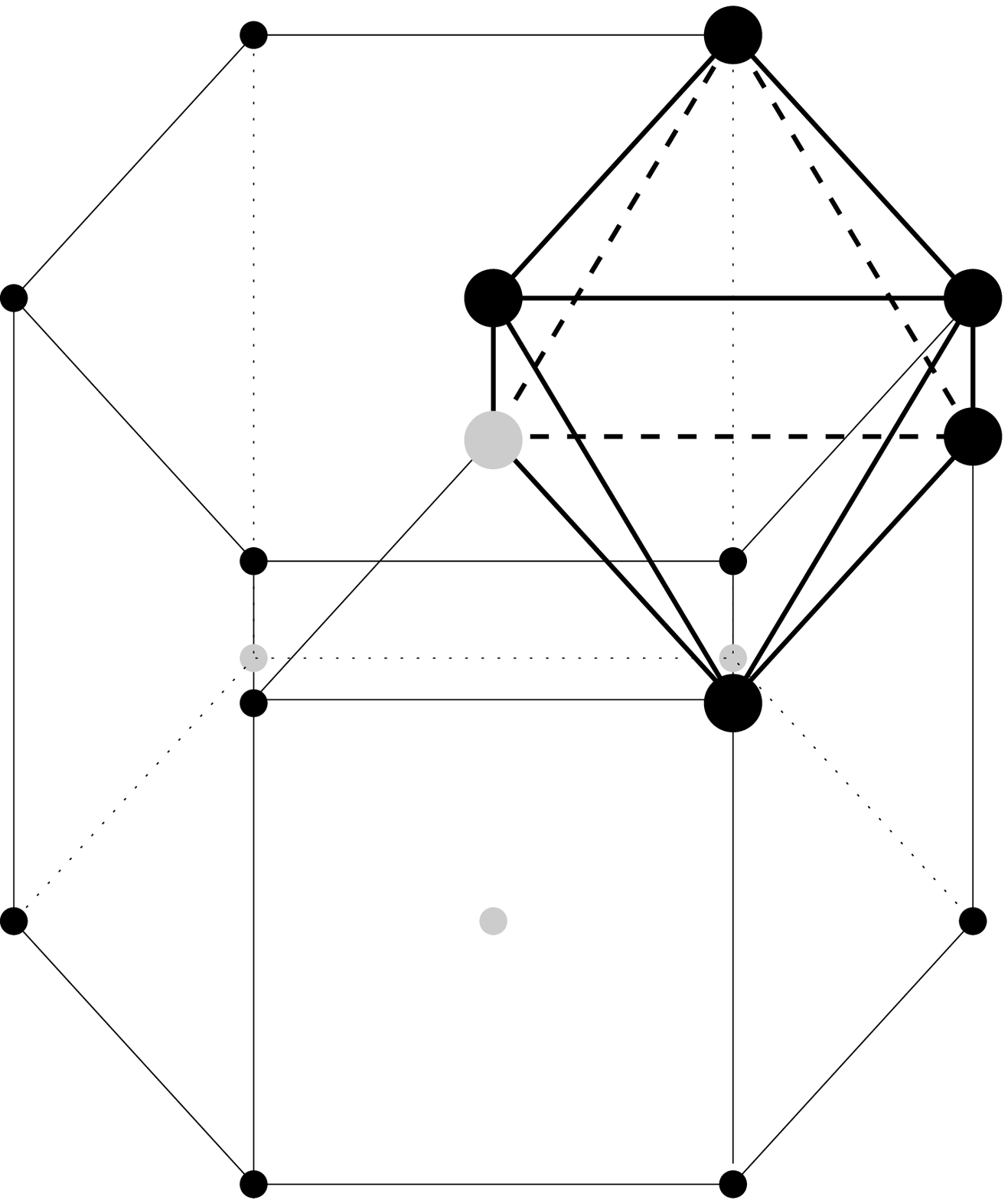}
}
\caption{The hexagonal close-packed lattice is associated with
a tessellation of tetrahedra and octahedra as the ones in the figure.
Only some of the bonds and some of the polyhedra of the pretriangulation are displayed.
}\label{fig:hcp}
\end{figure}
\subsection{BCC lattice} 
The body-centred cubic lattice 
\begin{figure}[p]
\centering
\includegraphics[width=.45\textwidth]{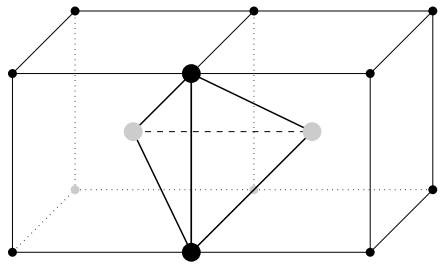}
\caption{The body-centred cubic lattice is associated with
a tessellation of irregular tetrahedra as the one in the figure.
}\label{fig:bcc}
\end{figure}
is typical of some metals as, e.g., iron and tungsten.
It is the Bravais lattice generated by the vectors
$$
\wu\bcc{}_1:=\tfrac{\sqrt2}{2}(-1,1,1) \,, \quad
\wu\bcc{}_2:=\tfrac{\sqrt2}{2}(1,-1,1) \,, \quad
\wu\bcc{}_3:=\tfrac{\sqrt2}{2}(1,1,-1) \,,
$$
namely
$$
\L\bcc{}:= \{ \xi_1\wu\bcc{}_1+\xi_2\wu\bcc{}_2+\xi_3\wu\bcc{}_3 \colon \ \xi_1,\xi_2,\xi_3\in\Z  \} \,.
$$
The resulting lattice can be viewed by repeating periodically in the space a cubic cell of side $\sqrt2$,
where the atoms lie at the vertices and at the centre of the cube.
According to Definition \ref{def:NN},
the nearest neighbours are those couples with distance $\frac{\sqrt6}{2}$,
(i.e., those joined by half a diagonal of the cubic cell,)
as well as those couples with distance $\sqrt2$
(i.e., those joined by an edge of the cubic cell).
Thus, in contrast with the face-centred cubic,
in this case the notion of nearest neighbours differs
from other notions based on the Euclidean distance.
According to this definition each atom has 14 nearest neighbours.
Correspondingly, the Delaunay pretriangulation consists
in a subdivision of the space into irregular tetrahedra,
with four edges of length $\frac{\sqrt6}{2}$ and two of length $\sqrt2$,
see Figure \ref{fig:bcc}.
Such an asymmetry in the definition of nearest neighbours leads to consider an anisotropic energy, see \eqref{bcc-energy}.
\subsection{DC lattice}
Finally, we present the diamond cubic lattice, which is composed
of two interpenetrating face-centred cubic lattices (thus, it is non-Bravais).
It is relevant in applications to nanowires, since it is the structure
of materials of use, such as silicon and germanium \cite{Kavanagh}.
When the sites of the two interpenetrating lattices are filled with
two different species of atoms, the structure is called zincblende
and is typical of Gallium arsenide (GaAs) and Indium arsenide (InAs),
also used in technical applications to semiconductor optoelectronics \cite{Kavanagh}.
\par
The diamond cubic structure is defined by
$$ 
\L\dc{}:=\{\u_i\dc{} +  \xi_1\wu\dc{}_1+\xi_2\wu\dc{}_2+\xi_3 \wu\dc{}_3 \colon \ \xi_1,\xi_2,\xi_3\in\Z \,, \ i=1,2\} \,, 
$$

where $\wu_j\dc{}:=\wu_j\fcc{}$, $j=1,2,3$, are as in the face-centred cubic and
$$
\u_1\dc{}:=(0,0,0)\,,\
\u_2\dc{}:=\sqrt2\big(\tfrac14,\tfrac14,\tfrac14\big)
$$
compose the basis. It is convenient to split the lattice as follows,
\be\label{def:subdc} 
\L\dc{}= \L\dc{_1} \cup \L\dc{_2} \,, 
\ee
$$
\L\dc{_i}:=\{\u_i\dc{} +  \xi_1\wu\dc{}_1+\xi_2\wu\dc{}_2+\xi_3 \wu\dc{}_3 \colon \ \xi_1,\xi_2,\xi_3\in\Z\} \,, \ i=1,2 \,, 
$$
where the sublattices $\L\dc{_i}$, $i=1,2$, are face-centred cubic,
see Figure \ref{fig:zincblende}.
%
%

Each atom of the sublattice $x\in\L\dc{_i}, i=1,2$, has four nearest neighbours
at distance $\frac{\sqrt6}4$, all belonging to the sublattice $\L\dc{_j} , j\neq i$.
Such bonds are not enough to provide a rigid tessellation of the space. Therefore we need
to take into account also the next-to-nearest neighbours.
By Definition \ref{def:NNN}, the next-to-nearest neighbours of $x$ in $\L\dc{}$
turn out to be its nearest neighbours as an element of $\L\dc{_i}$.
More precisely, each atom $x$ lies at the barycentre of a tetrahedron whose vertices are the nearest neighbours of $x$; the edges of such a tetrahedron correspond to next-to-nearest bonds.
Thus, when next-to-nearest neighbours are considered,
$\L\dc{}$ inherits some rigid structure from the (face-centred cubic) sublattices $\L\dc{_i}$, $i=1,2$.
\par
For a better understanding of the diamond cubic lattice,
we also refer to the simpler example of the planar honeycomb lattice,
which can be treated by the same methods as presented here. This two-dimensional example contains the main ideas for treating non-Bravais lattices
with next and next-to-nearest neighbours, see Figure \ref{fig:honey}.
\begin{figure}[p]
\centering
\includegraphics[width=.35\textwidth]{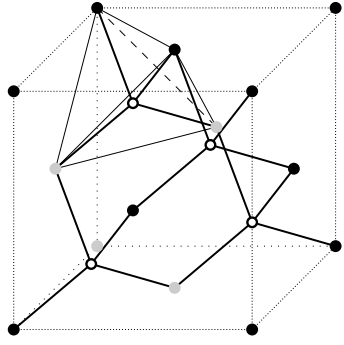}
\caption{Cubic cell in the diamond lattice $\L\dc{}$.
Atoms from the sublattice $\L\dc{}_1$ are represented in black/grey, while white atoms are from the sublattice $\L\dc{}_2$.
Nearest-neighbour bonds are displayed by solid thick lines.
Moreover, the picture shows a tetrahedron from the Delaunay pretriangulation of $\L\dc{}_1$:
its edges (solid and dashed thin lines) correspond to next-to-nearest neighbours in $\L\dc{}$.
A white atom lies at the barycentre of the tetrahedron, which is further divided
into four irregular tetrahedra by the bonds between the barycentre and each vertex.
}\label{fig:zincblende}
\end{figure}
\begin{figure}[p]
\centering
\includegraphics[width=.85\textwidth]{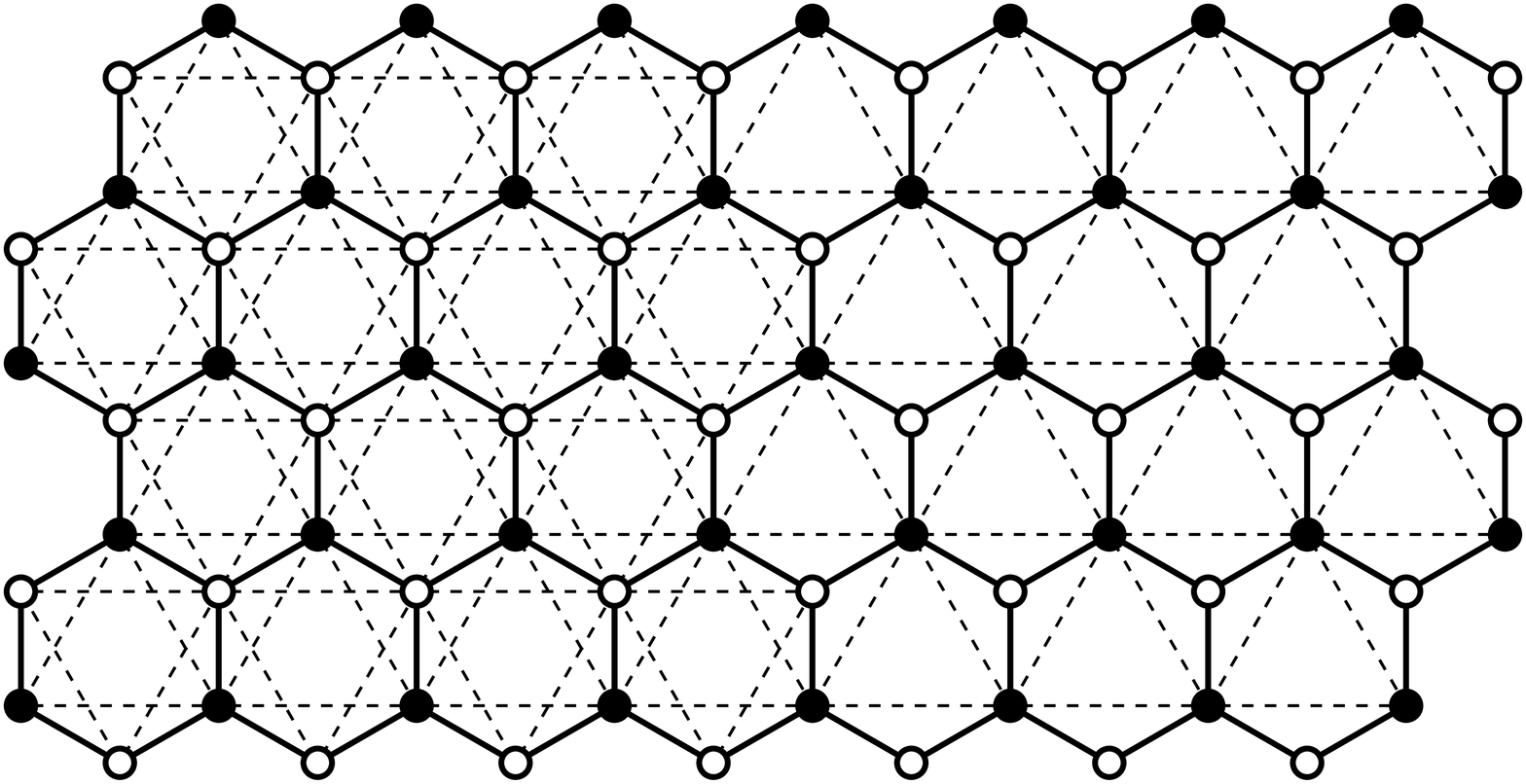}
\caption{Bonds and triangulation in a honeycomb lattice.
The lattice is given by
$\L\honey:=\L\honey_1 \cup \L\honey_2$, where
$\L\honey_i:=\{\u_i\honey +  \xi_1\wu\honey_1+\xi_2\wu\honey_2 \colon \ \xi_1,\xi_2 \in\Z\}$,
$\wu\honey_1:=(1,0)$, $\wu\honey_2:=(\frac12,\frac{\sqrt3}2)$,
$\u\honey_1:=(0,0)$, $\u\honey_2:=(0,\frac{\sqrt3}3)$.
This results into two interpenetrating sublattices $\L\honey_1$ and $\L\honey_2$,
both being hexagonal (i.e., equilateral triangular).
Atoms from $\L\honey_1$ and $\L\honey_2$ are displayed in different colors
in the picture, respectively in black and in white.
In the left part of the figure we indicate nearest neighbour (solid)
and next-to-nearest neighbour bonds (dashed lines).
The right part of the figure shows a possible triangulation,
that is the natural triangulation of $\L\honey_1$
enriched by considering the nearest-neighbour bonds between atoms $x\in\L\honey_1$ and $y\in\L\honey_2$.
This corresponds to ignoring the bonds between atoms of $\L\honey_2$, cf.\ Section~\ref{subsec:admiss}.
}
\label{fig:honey}
\end{figure}
%
%
%
\section{Setting of the model}\label{sec:not}
In order to mathematically describe the three-dimen\-sional heterostructured nanowires we introduce four parameters $\eps$, $k$, $\lambda$ and $\rho$, next to the lattice structures discussed above.
\par
The parameter $\eps>0$ scales the equilibrium lattice distances and allows considering a passage from the discrete to the continuous setting by letting $\eps\to 0^{+}$. The parameter $k\in\N$, $k\ge1$, mimics the thickness of the nanowire. The shape of the nanowire in the discrete setting is a parallelepiped of length $2L$, $L>0$, and width and the height $k\eps$, see Section~\ref{sec:refconfintene} for details. In the continuum limit $\eps\to 0^{+}$, the length is conserved whereas the width and height tend to zero thus giving a dimension reduction of the system from three to one dimension. Still, the microscopic parameter $k$ has an impact on the continuum energy, which then allows investigating the limiting behaviour in dependence of the microscopic thickness $k$ of the wire.
\par
The parameters $\lambda$ and $\rho$ allow modeling the microscopic biphase structure of the nanowire. Here, $\lambda\in(0,1)$ denotes the ratio of the equilibrium distances in the deformed configuration of the material on the right hand side of the interface and of the material on the left hand side of the interface, see Section~\ref{subsec:setting} for details.
\par
The parameter $\rho\in(0,1]$ gives the ratio of the lattice distances of the two parts of the material in the reference configuration, where $\rho\in[\lambda,1]$ is the most interesting case. This allows treating different geometries of the nearest neighbours and in particular for dislocations.
The case of a defect-free body is modeled by $\rho=1$; the coordination number, i.e., the number of nearest neighbours of any internal atom, is constant in the lattice. If the crystal contains dislocations in the reference configuration, the coordination number is not constant. As we will show, this is the case for $\rho\neq 1$ and $k$ sufficiently large.
\subsection{Biphase lattices and rigid tessellations}
\label{subsec:setting}
Given $\rho\in(0,1]$ and vectors $\wu_1,\wu_2,\wu_3,$ $\u_1,\u_2\in\R^3$,
we define the  biphase atomistic lattice
\be
\label{def:biphase}
\LL_\rho:= \LL_1^-\cup\LL_\rho^+
\ee
by juxtaposing the two lattices
$\LL_1^-$ and $\LL_\rho^+$ given by
\begin{align*}
\L_1^- &:= \{\u_i + \xi_1\wu_1+\xi_2\wu_2+\xi_3\wu_3 \colon \ \xi_1,\xi_2,\xi_3\in\Z \,, \ i=1,2 \,, \ \xi_1<0 \} \,, \\
\L_\rho^+ &:= \{\rho\u_i + \xi_1\wu_1+\xi_2\wu_2+\xi_3\wu_3 \colon \ \xi_1,\xi_2,\xi_3\in\rho\Z \,, \ i=1,2 \,, \ \xi_1\ge0 \} \,.
\end{align*}
We will apply the above definitions to the crystals introduced in Section \ref{3d} and denote by
$\LL_\rho\fcc{}$, $\LL_\rho\hcp{}$, $\LL_\rho\bcc{}$, and $\LL_\rho\dc{}$
the lattices obtained by taking the vectors
$\{\wu_j\fcc{},\u_i\fcc{}\}_{\substack{j=1,2,3\\i=1,2\,\,\,}}$,
$\{\wu_j\hcp{},\u_i\hcp{}\}_{\substack{j=1,2,3\\i=1,2\,\,\,}}$,
$\{\wu_j\bcc{},\u_i\bcc{}\}_{\substack{j=1,2,3\\i=1,2\,\,\,}}$, and
$\{\wu_j\dc{},\u_i\dc{}\}_{\substack{j=1,2,3\\i=1,2\,\,\,}}$,
respectively, where $\u_i\fcc{}=u_i\bcc{}:=0$.
\par
In each of the four cases we find similar structures
for the planes at the interface between the lattices $\L_1^-$ and $\L_\rho^+$.
More precisely, for the face-centred cubic and the hexagonal close-packed,
the interfacial planes are two-dimensional equilateral triangular Bravais lattices, see Figure \ref{fig:fcc-int}.
In the body-centred cubic, the interfacial planes are triangular Bravais lattices, but not equilateral,
since the distance between nearest neighbours is not constant.
Finally, in the diamond cubic, whose properties are similar to the face-centred cubic,
we also find equilateral triangular planes composed by atoms of one of the sublattices.
(For a lower dimensional idea, see Figure \ref{fig:honey2}.)
\par
\begin{figure}[p]
\vspace{1cm} 
\centering
\subfloat[]{
\includegraphics[height=.28\textwidth]{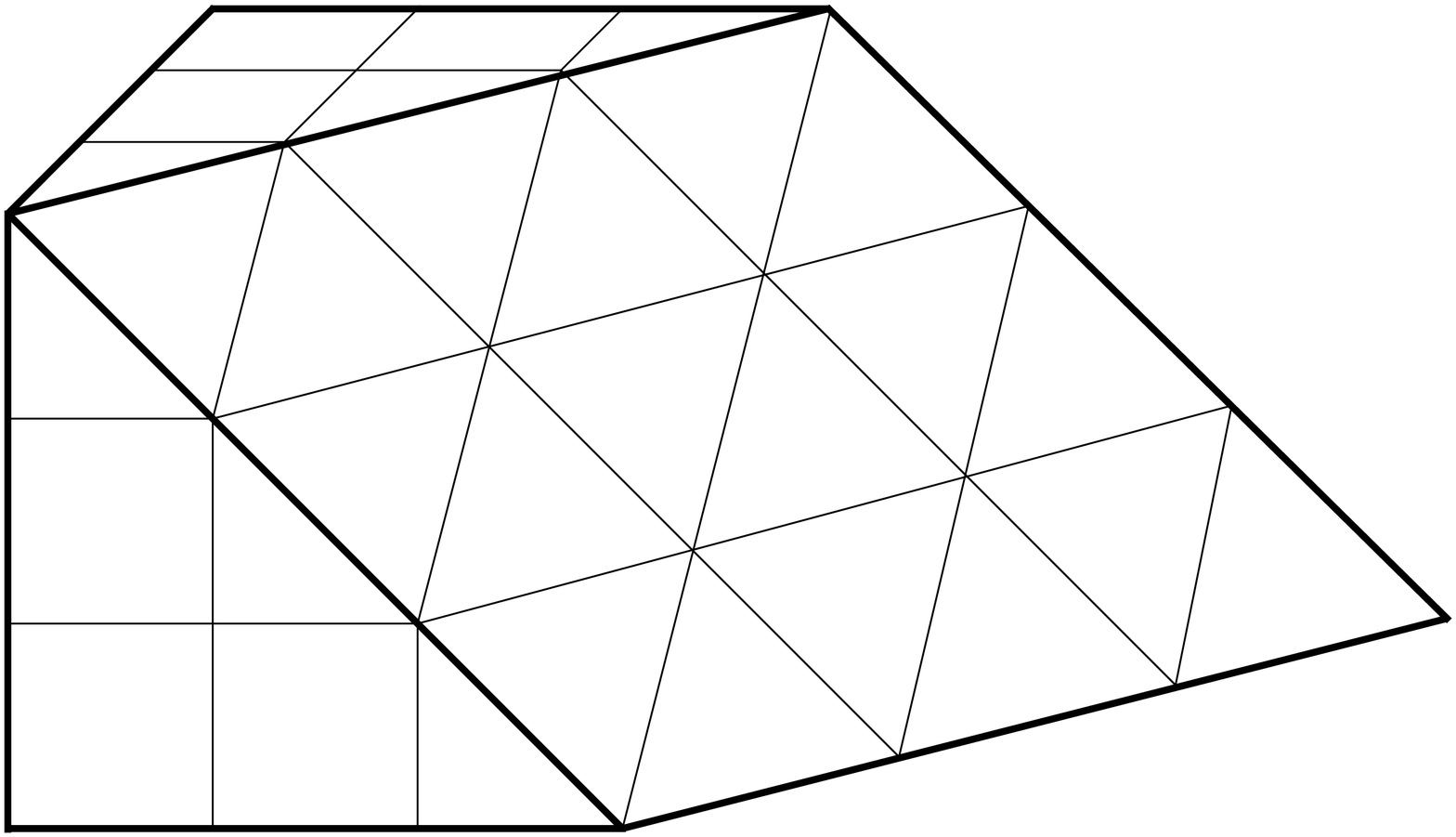}
\label{fig:interface21}
}
\hfill
\subfloat[]{
\includegraphics[height=.28\textwidth]{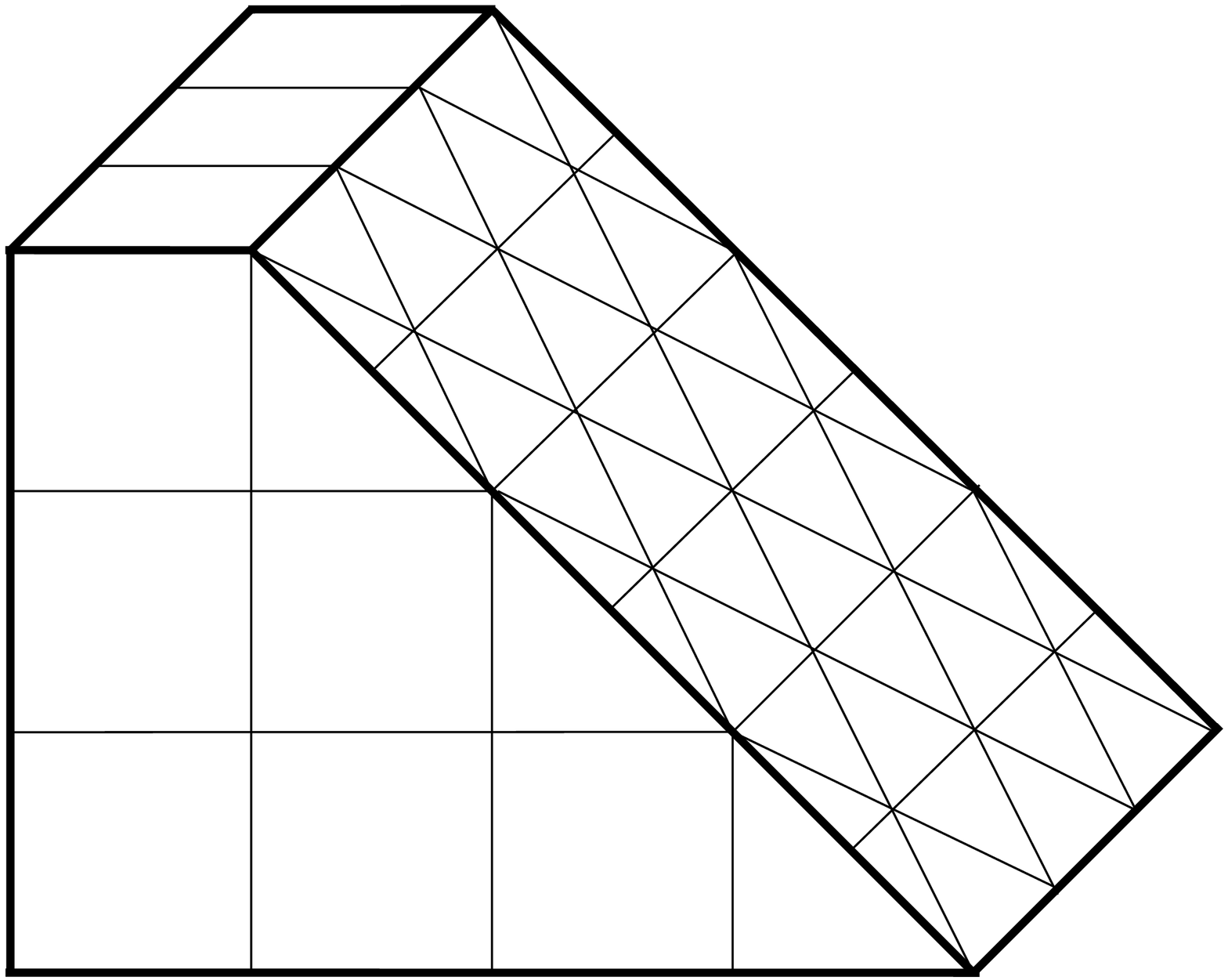}
\label{fig:interface31}
}
\caption{By cutting a cubic lattice along certain transverse planes,
one finds two-dimensional hexagonal Bravais lattices.
(a) face-centred; (b) body-centred.
}
\label{fig:fcc-int}
\end{figure}
\begin{figure}[p]
\vspace{1.5cm} 
\centering
\includegraphics[width=.85\textwidth]{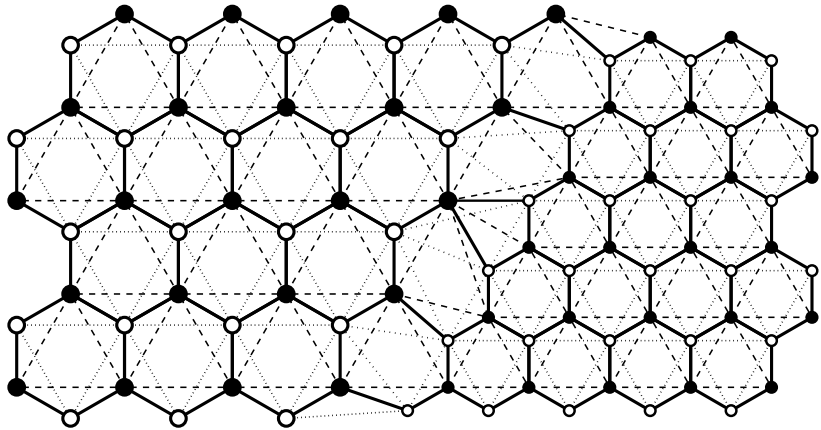}
\caption{Dislocations in a honeycomb-type lattice.
The bonds at the interface are chosen in the following way:
First one considers only black atoms and finds a Delaunay pretriangulation,
which is then refined to a triangulation (dashed lines);
the same is done for white atoms (dotted lines).
The dashed and dotted lines thus obtained give the bonds between next-to-nearest neighbours.
Finally, each white (resp.\ black) atom lying inside a triangle formed by three black (resp.\ white) atoms
is connected to the vertices of that triangle by nearest-neighbour bonds (solid lines).
}
\label{fig:honey2}
\end{figure}
\clearpage
Next we define the interfacial bonds in the case when $\rho\neq1$.
Following the idea already used in the regular parts of the lattices,
we consider the (unique) Delaunay pretriangulation $\TT'_\rho$ of $\LL_\rho$
(Definition \ref{def:Del-pre}).
This defines,
in the case of $\LL_\rho\fcc{}$, $\LL_\rho\hcp{}$, and $\LL_\rho\bcc{}$, a tessellation of the space into
rigid polyhedra away from the interface.
At the interface, the partition $\TT'_\rho$ may contain polyhedra with quadrilateral facets
(to see this, one should recall that the interfacial atoms lie on two parallel planes
consisting of two-dimensional triangular Bravais lattices, with parallel primitive vectors):
in such a case we refine  $\TT'_\rho$ further, in order to obtain rigid polyhedra.
More precisely, given a quadrilateral facet we introduce a further bond along a diagonal of the facet;
correspondingly, the region around the interface is subdivided into (irregular) tetrahedra and octahedra.
\par
Following this construction we define a partition of the space into rigid polyhedra
and call it the \emph{rigid Delaunay tessellation} associated to $\LL_\rho$, denoted by $\TT_\rho$.
The nearest neighbours are the extrema of the edges of the polyhedra of the subdivision.
Such procedure can be followed for $\LL_\rho\fcc{}$, $\LL_\rho\hcp{}$, and $\LL_\rho\bcc{}$.
Instead, in the diamond cubic lattice, applying Definition \ref{def:NN} may result in nearest-neighbour
bonds between interfacial atoms of the same sublattice 
(which should instead be next-to-nearest neighbours).
This would not be consistent with the structure defined away from the interface,
therefore we follow a different construction.
\par
Recall that $\LL_\rho\dc{}$ consists of two interpenetrating face-centred cubic lattices
$\LL_{\rho}\dc{_1}$ and $\LL_{\rho}\dc{_2}$, see \eqref{def:subdc} and \eqref{def:biphase}.
We introduce Delaunay pretriangulations for the sublattices $\LL_{\rho}\dc{_1}$ and
$\LL_{\rho}\dc{_2}$ individually,
which we further refine in order to obtain triangular facets
as before; we say that the vertices of the resulting edges are next-to-nearest neighbours
in $\LL_{\rho}\dc{}$.
The tessellation of $\LL_{\rho}\dc{_i}$ consists of (possibly irregular) tetrahedra and octahedra;
some of them may contain one atom $x$ of the other sublattice $\LL_{\rho}\dc{_j}$,
$j\neq i$. In this case,
we connect $x$ to the vertices of the surrounding polyhedron and say that each of those vertices
is a nearest neighbour for $x$.
When applied to the regular parts of the lattice, this construction is consistent with the notion
of nearest and next-to-nearest neighbours presented in the previous section.
(For a simpler idea about the resulting structure, we refer to Figure \ref{fig:honey2} in the case of a honeycomb-type lattice.)
\par
\subsection{Reference configurations and interaction energies} \label{sec:refconfintene}
We now pass to rescaled, bounded lattices.
Given $L>0$, $\eps\in(0,1]$, and $k\in\N$, we define
$$
\L_{\rho,\eps}(k):=(\eps\LL_\rho)\cap\overline\Om_{k\eps}\,,
$$
where
$$
\Om_{k\eps} := \{ \xi_1\wu_1+\xi_2\wu_2+\xi_3\wu_3\colon \xi_1\in(-L,L)\,,\ \xi_2,\xi_3\in(0, k\eps) \}
$$
and $\overline\Om_{k\eps}$ is the union of all (closed) polyhedra of $\TT_{\rho,\e}$ that intersect $\Om_{k\eps}$ on a nonempty set
(see \eqref{uffa} for the definition of $\TT_{\rho,\e}$). Notice that the lattices $\LL_{\rho,\e}\fcc{}$, $\LL_{\rho,\e}\hcp{}$, $\LL_{\rho,\e}\bcc{}$, and $\LL_{\rho,\e}\dc{}$ denote the corresponding rescaled, bounded lattices for the other crystal structures, with the lattice vectors chosen as above.
We set
\begin{align*}
\displaybreak[0]
\Om_{k\eps}^- &:= \{ \xi_1\wu_1+\xi_2\wu_2+\xi_3\wu_3\colon \xi_1\in(-L,0)\,,\ \xi_2,\xi_3\in(0, k\eps) \} \,,\\
\displaybreak[0]
\Om_{k\eps}^+ &:= \{ \xi_1\wu_1+\xi_2\wu_2+\xi_3\wu_3\colon \xi_1\in(0,L)\,,\ \xi_2,\xi_3\in(0, k\eps) \} \,,\\
\displaybreak[0]
\L_{1,\eps}^-(k) &:= \{\eps \u_i +  \xi_1\wu_1+\xi_2\wu_2+\xi_3\wu_3\in\L_{\rho,\eps}(k)\colon \xi_1<0\} \,,\\
\displaybreak[0]
\L_{\rho,\eps}^+(k) &:= \{\rho\eps\u_i + \xi_1\wu_1+\xi_2\wu_2+\xi_3\wu_3\in\L_{\rho,\eps}(k)\colon \xi_1\ge0\} \,.
\end{align*}
Two points $x,y\in \L_{\rho,\eps}$
are said to be nearest (resp., next-to-nearest) neighbours if $x/\eps$, $y/\eps$ fulfil the corresponding property in the lattice $\LL_\rho$.
This definition applies to each of the four cases presented above. Notice that we denote these lattices by $\L_{1,\eps}\fcc{^\pm}(k)$, $\L_{1,\eps}\hcp{^\pm}(k)$, $\L_{1,\eps}\bcc{^\pm}(k)$ and $\L_{1,\eps}\dc{^\pm}(k)$ as well as by $\L_{\rho,\eps}\fcc{^\pm}(k)$, $\L_{\rho,\eps}\hcp{^\pm}(k)$, $\L_{\rho,\eps}\bcc{^\pm}(k)$ and $\L_{\rho,\eps}\dc{^\pm}(k)$, respectively.
\par
We have introduced so far the bonds that enter the definition of the energy, which we generally
denote by  $\E_{\eps}^{\lambda}$.
Next we specialise $\E_{\eps}^{\lambda}$ for each of the four lattices introduced above.
In the cases of the face-centred cubic and of the hexagonal close-packed,
the total interaction energy is defined respectively by
\be\label{en:fcc}
\begin{split}
\E_{\eps}\fcc{,\,\lambda}(u_\eps,\rho,k) := &\
\tfrac{1}{2}\!\!\! \sum_{\substack{\NN{x}{y}\\x\in \L_{1,\eps}\fcc{^-}(k) \\y\in\L_{\rho,\eps}\fcc{}(k)}}
\left(\Big|\frac{u_\eps(x)-u_\eps(y)}{\e}\Big|-1\right)^2
\\ &
+
\tfrac{1}{2}\!\!\! \sum_{\substack{\NN{x}{y}\\x\in \L_{\rho,\eps}\fcc{^+}(k)\\y\in\L_{\rho,\eps}\fcc{}(k)}}
\left(\Big|\frac{u_\eps(x)-u_\eps(y)}{\e}\Big|-\lambda\right)^2
\end{split}
\ee
for every deformation $u_\eps\colon\LL\fcc{}_{\rho,\eps}(k)\to\R^3$
and by
\be\label{en:hcp}
\begin{split}
\E_{\eps}\hcp{,\,\lambda}(u_\eps,\rho,k) := &\
\tfrac{1}{2}\!\!\! \sum_{\substack{\NN{x}{y}\\x\in \L_{1,\eps}\hcp{^-}(k) \\y\in\L_{\rho,\eps}\hcp{}(k)}}
\left(\Big|\frac{u_\eps(x)-u_\eps(y)}{\e}\Big|-1\right)^2
\\ &
+
\tfrac{1}{2}\!\!\! \sum_{\substack{\NN{x}{y}\\x\in \L_{\rho,\eps}\hcp{^+}(k)\\y\in\L_{\rho,\eps}\hcp{}(k)}}
\left(\Big|\frac{u_\eps(x)-u_\eps(y)}{\e}\Big|-\lambda\right)^2
\end{split}
\ee
for every deformation $u_\eps\colon\LL\hcp{}_{\rho,\eps}(k)\to\R^3$.
\par
For the body-centred cubic, we need to use an anisotropic energy,
because of the different length of the bonds between nearest neighbours
in the reference configuration.
For every deformation $u_\eps\colon\LL\bcc{}_{\rho,\eps}(k)\to\R^3$ we define
\be\label{bcc-energy}
\begin{split}
\E_{\eps}\bcc{,\,\lambda}(u_\eps,\rho,k) := &\
\tfrac{1}{2}\!\!\! \sum_{\substack{\NN{x}{y}\\x\in \L_{1,\eps}\bcc{^-}(k) \\y\in\L_{\rho,\eps}\bcc{}(k)}}
\left(\Big|\frac{u_\eps(x)-u_\eps(y)}{\e}\Big|-\var^x_y\right)^2
\\ &
+
\tfrac{1}{2}\!\!\! \sum_{\substack{\NN{x}{y}\\x\in \L_{\rho,\eps}\bcc{^+}(k)\\y\in\L_{\rho,\eps}\bcc{}(k)}}
\left(\Big|\frac{u_\eps(x)-u_\eps(y)}{\e}\Big|-\lambda\,\var^x_y\right)^2 \,,
\end{split}
\ee
where $\var^x_y:=\var(\frac{x-y}{\mod{x-y}})$ and
$\var\colon \mathbb{S}^2\to(0,+\infty)$ is a smooth function such that
\begin{alignat*}{2}
\var(\wu)&=\sqrt2 \quad&&\text{if } \wu\in\{(\pm1,0,0),(0,\pm1,0),(0,0,\pm1)\} \,, \\
\var(\wu)&=\tfrac{\sqrt6}{2} \quad&&\text{if } \sqrt3\,\wu\in\{(\pm1,1,1), (\pm1,-1,1), (\pm1,1,-1), (\pm1,-1,-1)\}.
\end{alignat*}
\par
Finally, recall that the diamond cubic lattice consists of two interpenetrating face-centred  cubic lattices. Therefore we set for a deformation $u_\eps\colon\LL\dc{}_{\rho,\eps}(k)\to\R^3$
\bes
\E_{\eps}\dc{,\,\lambda}(u_\eps,\rho,k) :=
c_1 \E\dc{_1}_{{\rm NNN}} + c_2 \E\dc{_2}_{{\rm NNN}} + \E\dc{}_{{\rm NN}} \,,
\ees
where the first two summands account for next-to-nearest neighbour interactions and are defined as in \eqref{en:fcc}, namely
\bes
 \E\dc{_i}_{{\rm NNN}} :=
\tfrac{1}{2}\!\!\! \sum_{\substack{\NNN{x}{y}\\x\in \L_{1,\eps}\dc{_i^-}(k) \\y\in\L_{\rho,\eps}\dc{}(k)}}
\left(\Big|\frac{u_\eps(x)-u_\eps(y)}{\e}\Big|-1\right)^2 +
\tfrac{1}{2}\!\!\! \sum_{\substack{\NNN{x}{y}\\x\in \L_{\rho,\eps}\dc{_i^+}(k)\\y\in\L_{\rho,\eps}\dc{}(k)}}
\left(\Big|\frac{u_\eps(x)-u_\eps(y)}{\e}\Big|-\lambda\right)^2 \,,
\ees
while the last term is
\bes
\E\dc{}_{{\rm NN}} :=
\tfrac{1}{2}\!\!\! \sum_{\substack{\NN{x}{y}\\x\in \L_{1,\eps}\dc{^-}(k) \\y\in\L_{\rho,\eps}\dc{}(k)}}
\left(\Big|\frac{u_\eps(x)-u_\eps(y)}{\e}\Big|-\tfrac{\sqrt6}{4}\right)^2 +
\tfrac{1}{2}\!\!\! \sum_{\substack{\NN{x}{y}\\x\in \L_{\rho,\eps}\dc{^+}(k)\\y\in\L_{\rho,\eps}\dc{}(k)}}
\left(\Big|\frac{u_\eps(x)-u_\eps(y)}{\e}\Big|-\lambda\tfrac{\sqrt6}{4}\right)^2 \,.
\ees
The choice of the constants $c_1,c_2>0$ determines how strong the interactions between
atoms of the same sublattice $\L\dc{_i}$ are.
\par
\subsection{Admissible configurations}
\label{subsec:admiss}
In order to define the admissible deformations, we introduce piecewise affine functions.
To this end, we need to refine $\TT_\rho$ to a proper triangulation.
However, we do not change the definition of the nearest neighbours, i.e., we do not introduce new interactions in the energy.
\begin{remark}
For the reader's convenience, we summarise here the different tessellations of the space associated to a biphase discrete lattice $\LL_\rho$,
adopted in our setting.
\begin{itemize}
\item We have started from the (unique) \emph{Delaunay pretriangulation} $\TT'_\rho$ (Definition \ref{def:Del-pre}),
which may contain non-rigid polyhedra at the interface.
\item We have refined $\TT'_\rho$, obtaining a \emph{rigid Delaunay tessellation} $\TT_\rho$,
a partition of the space into (possibly irregular) tetrahedra and octahedra.
Such a tessellation is not unique, indeed we have chosen a diagonal
for each quadrilateral facet of polyhedra of $\TT'_\rho$.
The corresponding bonds enter the definition of the interaction energy.
\item In order to work with piecewise affine functions, in this section
we further refine $\TT_\rho$ to get three possible \emph{triangulations}
(i.e., subdivisions of the space into tetrahedra only), denoted by
$\TT_\rho^{(1)}$, $\TT_\rho^{(2)}$, and $\TT_\rho^{(3)}$, respectively.
\end{itemize}
The above construction is used to work in the case of
$\LL_\rho\fcc{}$, $\LL_\rho\hcp{}$, and $\LL_\rho\bcc{}$.
For  $\LL_\rho\dc{}$, the definition of $\TT_\rho$ is different,
as made precise in Sections \ref{subsec:setting} and \ref{subsec:admiss}.
\end{remark}
In the case of  $\LL_\rho\fcc{}$, $\LL_\rho\hcp{}$, and $\LL_\rho\bcc{}$,
given a (possibly irregular) octahedron of $\TT_\rho$, we divide it into four irregular tetrahedra
by cutting it along one of the three diagonals.
We choose the diagonal starting from the vertex with the largest $x_1$-coordinate;
if two or three vertices have the same largest $x_1$-coordinate, we take among them the point with largest $x_2$-coordinate;
if two of such vertices have also the same largest $x_2$-coordinate, we take the one with the largest $x_3$-coordinate.
By repeating the process on every octahedron of $\TT_\rho$, we obtain a triangulation that we denote by $\TT_\rho^{(1)}$.
Other two triangulations $\TT_\rho^{(2)}$ and $\TT_\rho^{(3)}$ are obtained by repeating the same procedure,
but with different ordering of the indices, namely $x_2,x_3,x_1$ and $x_3,x_1,x_2$ respectively.
\par
In the case of the diamond-cubic lattice, we define a triangulation as follows:
we consider the Delaunay pretriangulation of $\L\dc{_1}$, which is rigid.
As already observed, some of the tetrahedra of the latter pretriangulation contain
an atom of $\L\dc{_2}$
at the barycentre (more precisely, every other tetrahedron has this property,
see Figure \ref{fig:zincblende}).
Such tetrahedra are further subdivided by connecting
the barycentre to the vertices.
In other words, we define a tessellation into tetrahedra and octahedra by considering
the (nearest neighbour) interactions between atoms
$x\in\L\dc{_1}$ and $y\in\L\dc{_2}$,
as well as the interactions between atoms of $\L\dc{_1}$
(nearest neighbour if restricted to $\L\dc{_1}$,
next-to-nearest neighbour if viewed in the whole $\L\dc{}$),
and ignoring the interactions between atoms of $\L\dc{_2}$.
We apply the same rule to the biphase lattice $\LL_\rho\dc{}$ and
further subdivide the resulting octahedra as done for  $\LL_\rho\fcc{}$, $\LL_\rho\hcp{}$,
and $\LL_\rho\bcc{}$,
obtaining three possible triangulations.
For a better
understanding we illustrate the tessellation thus defined in the simpler case
of the honeycomb lattice in Figure \ref{fig:honey}.
\par
Given a function $u\colon \L_\rho\to\R^3$, we denote by $u^{(1)}$, $u^{(2)}$, and $u^{(3)}$ its piecewise affine interpolations
with respect to the triangulations $\TT_\rho^{(1)}$, $\TT_\rho^{(2)}$, and $\TT_\rho^{(3)}$, respectively.
Analogous definitions and notations hold for the rescaled bounded lattices of the type $\L_{\rho,\eps}(k)$.
More precisely, define
\be\label{uffa}
\TT_{\rho,\eps}:=\{\eps T\colon T\in\TT_\rho\} \quad \text{and} \quad \TT_{\rho,\eps}^{(i)}:=\{\eps T\colon T\in\TT_\rho^{(i)}\}
\ee
for $i=1,2,3$.
The set of admissible deformations is
\be\label{ad-3d}
\begin{split}
\A_{\rho,\eps}(\Om_{k\eps}):= \big\{ u_\eps\in C^0(\overline\Om_{k\eps};\R^3) \colon & u_\eps \ \text{piecewise affine,}\\
& \D u_\eps \ \text{constant on}\ \Om_{k\eps}\cap T\quad \forall\, T\in\TT_{\rho,\eps}^{(1)}\,, \\
& \det \D u_\eps>0 \ \text{a.e.\ in}\ \Om_{k\eps}\,, \\
& u_\eps(\mathcal P) \ \text{is convex}\ \forall\, \mathcal P\in\TT_{\rho,\eps} \big\} \,.
\end{split}
\ee
The restriction of $u_\eps\in\A_{\rho,\eps}(\Om_{k\eps})$ to $\L_{\rho,\eps}(k)$ is still denoted by $u_\eps$.
We will see that the limiting functional is independent of the choice of the triangulation $\TT_{\rho,\eps}^{(1)}$ in \eqref{ad-3d}, cf.\ Remark~\ref{rmk5}.
\par
\begin{remark}\label{rmk:3d}
The assumption of convexity on the images of the octahedra of $\TT_{\rho,\eps}$ is needed to enforce rigidity:
without such an assumption an octahedron could be compressed without paying any energy.
On the other hand,  the notion of non-interpenetration used in \eqref{ad-3d} is independent of the choice of the triangulation $\TT_{\rho,\eps}^{(1)}$
provided the image of each octahedron is assumed to be convex, as clarified by Lemma \ref{lemma:convex}.
\end{remark}
It will be convenient to introduce
\bes
 \Om_{k,\infty}:=\,
 \{ \xi_1\wu_1+\xi_2\wu_2+\xi_3\wu_3\colon \xi_1\in(-\infty,+\infty)\,,\ \xi_2,\xi_3\in(0, k) \}
\ees
and to denote by $\overline\Om_{k,\infty}$ the union of all (closed) polyhedra of $\TT_{\rho}$ that have a nonempty intersection with $\Om_{k,\infty}$.
We define the set of admissible deformations on the rescaled infinite domain
as
\bes
\begin{split}
\A_{\rho,\infty}(\Om_{k,\infty}):= \big\{ u\in C^0(\overline\Om_{k,\infty};\R^3) \colon & u \ \text{piecewise affine,}\\
& \D u \ \text{constant on}\ \Om_{k,\infty}\cap T\ \forall\, T\in\TT_{\rho}\,, \\
& \det \D u>0 \ \text{a.e.\ in}\ \Om_{k,\infty}\,, \\
&u (\mathcal P) \ \text{is convex}\ \forall\, \mathcal P\in\TT_{\rho} \big\} \,.
\end{split}
\ees
All definitions apply to each of the four cases presented above.
Correspondingly, we define the energy on the rescaled infinite domain and denote it by
$\E_{\infty}^\lambda$.
Specifically, given a discrete deformation $v$ of the face-centred cubic lattice, $\E_{\infty}^\lambda$ is defined by
\be\label{energia-infinita}
\E_{\infty}\fcc{,\,\lambda}(v,\rho,k) :=
\tfrac{1}{2}\!\!\! \sum_{\substack{\NN{x}{y}\\x\in \L_{1}\fcc{^-}(k) \\y\in\L_{\rho}\fcc{}(k)\\x,y\in\overline\Om_{k,\infty}}}
\left(\big| v(x)-v(y) \big|-1\right)^2 +
\tfrac{1}{2}\!\!\! \sum_{\substack{\NN{x}{y}\\x\in \L_{\rho}\fcc{^+}(k)\\y\in\L_{\rho}\fcc{}(k)\\x,y\in\overline\Om_{k,\infty}}}
\left(\big| v(x)-v(y) \big|-\lambda\right)^2 \,.
\ee
Analogous definitions hold for $\LL_\rho\hcp{}$, $\LL_\rho\bcc{}$ and
 $\LL_\rho\dc{}$.
\section{Discrete rigidity in dimension three}\label{sec:rigidity}
A key tool in the analysis developed in \cite{LaPaSc2014} for two-dimensional heterogeneous nanowires as well as in the analysis of the three-dimensional setting is the following rigidity estimate.
%
\begin{theorem}\label{thm-rigidity}
\cite[Theorem 3.1]{fjm}
Let $N\geq 2$, and let $1< p < +\infty$.
Suppose that $U\subset\R^{N}$ is a bounded Lipschitz domain.
Then there exists a constant $C=C(U)$
such that for each $u\in W^{1,p}(U;\R^{N})$
there exists a constant matrix $R\in SO(N)$ such that
\begin{equation}\label{rigidity}
\|\D  u-R\|_{L^{p}(U;\Mnn)} \leq C(U)
\|\dist(\D  u,SO(N))\|_{L^{p}(U)}\,.
\end{equation}
The constant $C(U)$ is invariant under dilation and translation of the domain.
\end{theorem}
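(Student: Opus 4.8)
The statement is the geometric rigidity estimate of Friesecke, James and M\"uller, so the natural plan is to reproduce the architecture of their proof. First I would reduce to the case in which $U$ is a cube. This is legitimate because a bounded Lipschitz domain can be covered by finitely many cubes on each of which the estimate holds, and the resulting local rotations can be glued using that on two overlapping cubes the corresponding rotations must be close, their difference being controlled by the right-hand side; the dilation invariance asserted in the statement is exactly what keeps the constant in such a covering argument uniform. Throughout I write $e(u):=\tfrac12(\D u+\D u\transp)$ for the symmetric gradient and use that near $SO(N)$ one has $\dist(I+G,SO(N))=\mod{\mathrm{sym}\,G}+O(\mod G^2)$, which is the bridge between the nonlinear distance and the linear theory.

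The core is a base case for \emph{uniformly small strain}: if $\dist(\D u,SO(N))\le\delta_0$ a.e.\ for a small absolute constant $\delta_0$, then the estimate holds. To see this I would first replace $u$ by $R_0\transp u$, where $R_0$ is the rotation in the polar decomposition of the average $\frac{1}{\mod U}\int_U \D u$, so that $\D u$ is close to the identity on average. The pointwise expansion above then gives $\mod{e(u)-I}\le \dist(\D u,SO(N))+C\delta_0\mod{\D u - I}$. Korn's (second) inequality, applied to the displacement $w:=u-\mathrm{id}$, bounds $\inf_{A\ \mathrm{skew}}\norm{\D u - I - A}{L^p(U)}$ by $C\norm{e(u)-I}{L^p(U)}$ (the constant $I$ being symmetric and thus harmless in $e$). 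Replacing the optimal skew $A$ by the genuine rotation $\exp(A)\in SO(N)$ costs only a quadratic term, which for $\delta_0$ small can be absorbed into the left-hand side. This produces a single constant rotation $R$ with the desired bound in the small-strain regime.

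To remove the smallness assumption I would use a Lipschitz/maximal-function truncation. Splitting $U$ into the good set where $\dist(\D u,SO(N))\le\delta_0$ and the bad set where it exceeds $\delta_0$, the bad set carries energy at least $\delta_0^p$ per unit measure, so both its measure and the $L^p$ mass of $\D u$ on it are controlled by $\norm{\dist(\D u,SO(N))}{L^p(U)}^p$. The truncation lemma then produces $\tilde u\in W^{1,\infty}$ with $\norm{\D\tilde u}{L^\infty}$ bounded, agreeing with $u$ off a set of comparable measure, and with $\D\tilde u$ forced close to $SO(N)$ on most of $U$; applying the base case to $\tilde u$ yields a rotation $R$, and estimating $\D u-\D\tilde u$ on the exceptional set transfers the bound back to $u$. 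The extension from $p=2$ to general $1<p<\infty$ requires only that Korn's inequality and the truncation estimate be invoked in their $L^p$ form.

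I expect the main obstacle to be precisely this passage from the small-strain base case to the general statement. The difficulty is genuinely nonlinear: because $SO(N)$ is a curved manifold, the Liouville rigidity underlying the exact case ($\D u\in SO(N)\Rightarrow\D u$ constant) is a global fact, and its quantitative version must prevent $\D u$ from slowly drifting around $SO(N)$ while keeping its distance small. Controlling this drift, i.e.\ guaranteeing that one and the same $R$ serves on all of $U$, is what forces the careful combination of the truncation with the covering and gluing step, and it is also where the dependence of the constant on $U$ enters.
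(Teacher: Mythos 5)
First, a point of order: the paper does not prove this statement. Theorem \ref{thm-rigidity} is imported verbatim from Friesecke, James and M\"uller \cite{fjm} and used as a black box, so there is no internal proof to compare yours against; your proposal can only be judged as a reconstruction of the argument in \cite{fjm}. The outer layers of your outline are consistent with that argument (maximal-function/Lipschitz truncation to reduce to $\|\nabla u\|_{L^\infty}\le M$; covering a Lipschitz domain by cubes and gluing the local rotations, with dilation invariance keeping the constants uniform; the remark that the $L^p$ case needs the auxiliary tools in $L^p$ form, which in the literature is actually a separate refinement of the $p=2$ theorem). The core step, however, is wrong as written.

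The gap is in your ``small-strain base case''. From $\dist(\nabla u,SO(N))\le\delta_0$ a.e.\ together with closeness of the \emph{average} of $\nabla u$ to $I$ you cannot deduce the pointwise inequality $|e(u)-I|\le\dist(\nabla u,SO(N))+C\delta_0\,|\nabla u-I|$: the expansion $\dist(I+G,SO(N))=|\mathrm{sym}\,G|+O(|G|^2)$ is valid only when $G$ is small \emph{pointwise}, i.e.\ when $\nabla u(x)$ is close to the single rotation $I$ and not merely to the manifold $SO(N)$. For instance, if on part of the domain $\nabla u$ equals a rotation by $\pi/2$ (for $N=2$), then $\dist(\nabla u,SO(2))=0$ there while $|\mathrm{sym}\,\nabla u-I|=\sqrt2$, so the inequality fails no matter how small $\delta_0$ is. Excluding precisely this slow drift of $\nabla u$ around $SO(N)$ is the analytic heart of the theorem, and it is not supplied by the truncation or by the covering/gluing step, which only propagate a local estimate once one exists; invoking Korn at this stage makes the argument circular. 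In \cite{fjm} the drift is eliminated by a different device: on a cube one uses the Piola identity $\operatorname{div}\cof\nabla u=0$ to write $\Delta u=\operatorname{div}\big(\nabla u-\cof\nabla u\big)$, observes that $|A-\cof A|\le C_M\,\dist(A,SO(N))$ for $|A|\le M$, and thus approximates $u$ by a harmonic map; interior estimates for the harmonic part combined with an iteration over subcubes then produce one rotation serving the whole cube with the optimal bound. A Korn-type linearization can be used only after such a step (or after a Reshetnyak/Liouville compactness argument) has already pinned $\nabla u$ near a single rotation. Your last paragraph correctly identifies the drift as the main obstacle but attributes its resolution to the wrong ingredients, so the proposal as it stands does not close.
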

In order to employ  the above result, we need the discrete rigidity estimates of Lemmas \ref{lemma:tetra} and \ref{lemma:ottaedro}, which state that the energy of a lattice cell is bounded from below
by the distance of the deformation gradient from the set of rotations.
Similar rigidity estimates are used in \cite{BSV,FlaThe,Schm06,Th06}.
\par
We use the following notation for the vectors determined by the edges of the regular tetrahedron $\mathcal S$ of edge length one:
$\w_1:=(1,0,0)$, $\w_2:=(\frac{1}{2},\frac{\sqrt{3}}{2},0)$, $\w_3:=\w_2-\w_1$,
$\w_4:=(\frac{1}{2},\frac{\sqrt{3}}{6},\frac{\sqrt{6}}{3})$,
$\w_5:=\w_4-\w_2$, and $\w_6:=\w_4-\w_1$, cf.\ Figure~\ref{tetraedro}.
\par
\begin{lemma}\label{lemma:tetra}
There exists $C>0$ such that
\be\label{rig-tetraedro}
\dist^2(F,SO(3)) \leq  C \sum_{i=1}^6( |F\w_i|-1)^2
\ee
for every  $F\in GL^+(3)$.
\end{lemma}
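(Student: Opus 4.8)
The plan is to combine three ingredients: an identification of the zero set of the right-hand side, a sharp quadratic lower bound near $SO(3)$, and a coercivity estimate at infinity, glued together by a compactness argument. Write $g(F):=\sum_{i=1}^6(|F\w_i|-1)^2$. Both $g$ and $F\mapsto\dist(F,SO(3))$ are invariant under $F\mapsto QF$ for $Q\in SO(3)$, so by the polar decomposition $F=R_0P$ (with $R_0\in SO(3)$ and $P$ symmetric positive definite, using $\det F>0$) it suffices to prove the estimate for symmetric positive definite $P$, for which $\dist(P,SO(3))=|P-I|$.

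First I would identify the zero set. If $g(F)=0$ then $|F\w_i|=1$ for all six edges, and expanding $|F\w_3|^2=|F\w_1|^2+|F\w_2|^2-2\,F\w_1\cdot F\w_2$ (and similarly using $\w_5=\w_4-\w_2$ and $\w_6=\w_4-\w_1$) forces every inner product $F\w_j\cdot F\w_k$ among the three edges $\w_1,\w_2,\w_4$ emanating from a common vertex to equal $\tfrac12$. Hence $(F\w_1,F\w_2,F\w_4)$ has the same Gram matrix as the basis $(\w_1,\w_2,\w_4)$, so $F$ agrees on a basis with an orthogonal map and is itself orthogonal; since $\det F>0$, $F\in SO(3)$. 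This is the algebraic counterpart of the Cauchy rigidity of the tetrahedron invoked in Section~\ref{3d}.

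For the local estimate, set $P=I+S$ with $S$ symmetric and small. A Taylor expansion gives $|P\w_i|-1=\w_i\transp S\,\w_i+O(|S|^2)$, whence $g(P)=\sum_{i=1}^6(\w_i\transp S\,\w_i)^2+O(|S|^3)$. The essential point is then the linear-algebra fact that $S\mapsto(\w_i\transp S\,\w_i)_{i=1}^6$ is injective on the six-dimensional space of symmetric matrices. I would prove this directly: if $\w_i\transp S\,\w_i=0$ for every edge $\w_i=p_j-p_k$ of the tetrahedron with vertices $p_0=0$, $p_1=\w_1$, $p_2=\w_2$, $p_3=\w_4$, then the edges from $p_0$ give $p_k\transp S\,p_k=0$, and polarization on the remaining edges yields $p_j\transp S\,p_k=0$ for all $j,k\in\{1,2,3\}$; since $\w_1,\w_2,\w_4$ form a basis of $\R^3$, this forces $S=0$. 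Injectivity between equidimensional spaces gives a constant $c_0>0$ with $\sum_i(\w_i\transp S\,\w_i)^2\ge c_0|S|^2$, so $g(P)\ge\tfrac{c_0}{2}\dist^2(P,SO(3))$ for $P$ near $I$, hence $g(F)\ge\tfrac{c_0}{2}\dist^2(F,SO(3))$ for $F$ in a neighbourhood of $SO(3)$.

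It remains to pass from local to global. Here I would use the identity $\sum_{i=1}^6\w_i\w_i\transp=2I$ (a consequence of the symmetry of the regular tetrahedron, checked directly on the given vectors), which yields $\sum_i|F\w_i|^2=2|F|^2$ and hence, by Cauchy--Schwarz, $g(F)=2|F|^2-2\sum_i|F\w_i|+6\ge 2|F|^2-4\sqrt3\,|F|+6$. Thus $g$ grows quadratically in $|F|$, while $\dist^2(F,SO(3))\le(|F|+\sqrt3)^2$. A contradiction argument then finishes: if no constant worked, choose $F_n$ with $\dist^2(F_n,SO(3))>n\,g(F_n)$; the growth bound rules out $|F_n|\to\infty$, and on a bounded subsequence $g(F_n)\to0$ forces, via the zero-set analysis together with $\det F_n>0$, that $\dist(F_n,SO(3))\to0$, placing $F_n$ eventually in the region where the local estimate applies and contradicting $\dist^2>n\,g$. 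I expect the injectivity of $S\mapsto(\w_i\transp S\,\w_i)_i$ — the infinitesimal rigidity of the tetrahedral cell — to be the heart of the matter; the zero-set identification and the coercivity at infinity are comparatively routine, the latter resting on the isotropy identity $\sum_i\w_i\w_i\transp=2I$.
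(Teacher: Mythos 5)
Your proof is correct, and it takes a genuinely different route from the paper's. The paper fixes the rotational freedom by normalizing $F$ (so that $F\w_1\parallel\w_1$ and $F\w_2\in\mathrm{span}\{\w_1,\w_2\}$), bounds $\dist^2(F,SO(3))\le|F-I|^2\le C\sum_{i\in\{1,2,4\}}|(F-I)\w_i|^2$, and then uses the law of cosines to express this upper bound as an explicit function $f(\delta)$ of the six deviations $\delta_i=|F\w_i|-1$; the quadratic bound follows from $f(0)=0$, $\nabla f(0)=0$ and smoothness of $f$ near the origin, with a short separate argument for large $|\delta|$. You instead factor out the rotation by polar decomposition, identify the zero set of $g(F)=\sum_i(|F\w_i|-1)^2$ algebraically via the Gram matrix of $(\w_1,\w_2,\w_4)$, prove the local estimate by showing that the linearization $S\mapsto(\w_i\transp S\,\w_i)_{i=1}^6$ is injective on symmetric matrices (the infinitesimal rigidity of the cell, proved by polarization along the edges), and control the regime far from $SO(3)$ via the isotropy identity $\sum_i\w_i\w_i\transp=2I$, which gives $g(F)\ge2(|F|-\sqrt3)^2$, followed by a compactness/contradiction argument. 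Your route avoids the explicit trigonometric formulas and the verification that the paper's assembled function is regular at the origin; the price is a non-explicit constant, whereas the paper's computation is in principle quantitative and its structure is reused almost verbatim in the proof of Lemma~\ref{lemma:ottaedro} for the octahedron, where the extra diagonal length must be expressed through the edge lengths. One small point to make explicit in your final compactness step: for a bounded subsequence $F_n\to F_*$ with $g(F_n)\to0$, the Gram-matrix argument gives $F_*\in O(3)$ without any sign condition, and it is the continuity of the determinant (giving $\det F_*\ge0$, hence $\det F_*=1$) rather than $\det F_n>0$ itself that places $F_*$ in $SO(3)$; as written, ``together with $\det F_n>0$'' slightly glosses over the fact that positivity of the determinant need not pass to the limit.
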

\begin{proof}
\begin{figure}
\centering
\psfrag{w1}{$\w_1$}
\psfrag{w2}{$\w_2$}
\psfrag{w3}{$\w_3$}
\psfrag{w4}{$\w_4$}
\psfrag{fw1}{$F\w_1$}
\psfrag{fw2}{$F\w_2$}
\psfrag{fw4}{$F\w_4$}
\includegraphics[width=.5\textwidth]{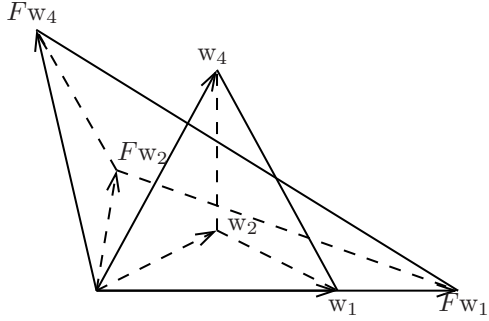}
\caption{The tetrahedron $\mathcal S$ and its image $F(\mathcal S)$.}
\label{tetraedro}
\end{figure}
Set $\delta_i:= |F \w_i| -1$ and $\delta:=(\delta_1,\dots,\delta_6)$, then $\sum_{i=1}^6( |F\w_i|-1)^2 = \sum_{i=1}^6 \delta_i^2=|\delta|^2$.
Without loss of generality we may assume that
$$
F \w_1 =(1+\delta_1)\w_1\,, \quad
F\w_2 \in {\rm{span}}\{\w_1,\w_2\}\,,\quad
F\w_2 \cdot e_2 >0 \,,
$$
as in Figure \ref{tetraedro}. Notice that the above assumptions imply $F\w_4 \cdot e_3 >0$.
We have
\begin{align}\label{dist-est-2}
\nonumber
\dist^2(F,SO(3)) \leq |F-I|^2 & \leq C \big( | (F-I) \w_1 |^2 +   |(F-I)\w_2 |^2   +  |(F-I)\w_4 |^2  \big)  \\
&=C \big(\delta_1^2  + | (F-I)\w_2 |^2 +  |(F-I)\w_4 |^2   \big) \,.
\end{align}
By a simple geometric argument one finds
\be\label{dist-estimate}
|(F-I)\w_2 |^2 = 1 + (1+\delta_2)^2 -2(1+\delta_2)\cos\big(\theta_{12} - \tfrac{\pi}{3}\big) \,,
\ee
where $\theta_{12}$ is the angle (measured anticlockwise) between $\w_1$ and $F\w_2$,
which is determined by
\be\label{dist-estimate2}
\cos\theta_{12} =
\frac{ (1+\delta_1)^2  +  (1+\delta_2)^2 - (1+\delta_3)^2 }{ 2 (1+\delta_1)(1+\delta_2)}
\quad \text{and} \quad \sin\theta_{12} >0 \,,
\ee
cf.\ \cite[Proof of Lemma~2.2]{LaPaSc2014}.
Notice that the condition $\sin\theta_{12}>0$ follows from the assumptions $F\in GL^+(3)$
and $F\w_2 \cdot e_2 >0 $.
\par
Denote by $\theta_{ij}$ the acute angle formed by $F\w_i$ and $F\w_j$ and by $\eta_{44}$
that between $F\w_4$ and $\w_4$.
Since
\be\label{formula:F-I}
 |(F-I)\w_4 |^2 = 1+  (1+\delta_4)^2   - 2 (1+\delta_4)\cos\eta_{44}\,,
 \ee
 in order to express
the right hand side of \eqref{dist-est-2} in terms of the $\delta_i$'s, we need to specialize
$\cos\eta_{44}$ in terms of the $\delta_i$'s.
Set
$$
\frac{F\w_4}{|F\w_4|}: =(a_1,a_2,a_3) \,,
$$
and remark that, by assumption, $a_3 >0$.
Thus
\begin{equation}\label{tildecos44}
\cos\eta_{44}= \frac{F\w_4}{|F\w_4|} \cdot \w_4 =
\tfrac{1}{2}a_1 + \tfrac{\sqrt{3}}{6}a_2 +  \tfrac{\sqrt{6}}{3}\sqrt{1- a_1^2 -a_2^2} \,.
\end{equation}
On the other hand $a_1$ and $a_2$ are computed by solving
\begin{align}\label{coordinate}
& a_1 = \frac{F\w_4}{|F\w_4|} \cdot \w_1 = \cos\theta_{14} \,, \\
&
a_1  \cos\theta_{12} + a_2  \sin\theta_{12} =
\frac{F\w_4}{|F\w_4|} \cdot \frac{F\w_2}{|F\w_2|} = \cos\theta_{24}\,,
\end{align}
where
\be
\cos\theta_{14} =
\frac{ (1+\delta_1)^2  +  (1+\delta_4)^2 - (1+\delta_6)^2 }{ 2 (1+\delta_1)(1+\delta_4)}\,,
\ee
\begin{equation}\label{cos24}
\cos\theta_{24} =
\frac{ (1+\delta_2)^2  +  (1+\delta_4)^2 - (1+\delta_5)^2 }{ 2 (1+\delta_2)(1+\delta_4)}\,.
\end{equation}
Taking into account \eqref{dist-estimate}--\eqref{cos24}, one can express the right hand side of
\eqref{dist-est-2} as a function $f$ of  $\delta$ and see that $f(0)=0$ and $\nabla f(0)=0$,
which implies $f(\delta)\leq C|\delta|^2$ for $|\delta|$ sufficiently small.
For larger $|\delta|$ the inequality readily follows from \eqref{dist-est-2}--\eqref{formula:F-I}.
\end{proof}
%
%
%
%
We will consider the octahedron $\mathcal O$ generated by the points
$P_1:=(0,0,0)$, $P_2:=(1,0,0)$, $P_3:=(0,1,0)$, $P_4:=(1,1,0)$, $P_5:=(\frac12,\frac12,\frac{\sqrt{2}}{2})$,
and $P_6:=(\frac12,\frac12,-\frac{\sqrt{2}}{2})$, see Figure \ref{ottaedro}.
\begin{figure}
\centering
\psfrag{1}{$P_1$}
\psfrag{2}{$P_2$}
\psfrag{3}{$P_3$}
\psfrag{4}{$P_4$}
\psfrag{5}{$P_5$}
\psfrag{6}{$P_6$}
\includegraphics[width=.35\textwidth]{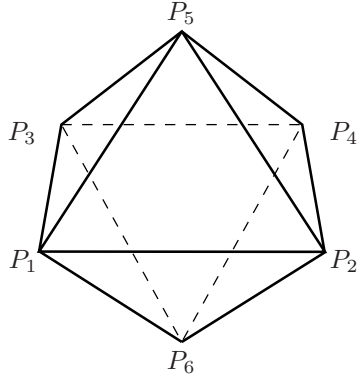}
\caption{The octahedron $\mathcal O$.}
\label{ottaedro}
\end{figure}
We call $\TT^{(1)}$ the triangulation determined by cutting $\mathcal O$ along the diagonal $P_1P_4$, further $\TT^{(2)}$ denotes the triangulation corresponding to $P_2P_3$, and $\TT^{(3)}$ the one corresponding to $P_5P_6$.
\par
Given a deformation $u$ of the six vertices of $\mathcal O$,
$u^{(i)}$ denotes the piecewise affine extension of $u$ corresponding to
the triangulation $\TT^{(i)}$, $i=1,2,3$.
In the next lemma, $\mathcal Q$ denotes the interior of the (bounded) polyhedron determined by the images of the facets of $\mathcal O$ through any of the piecewise affine extensions $u^{(i)}$.
(Notice that the images of the facets do not depend on the chosen extension.)
\begin{lemma}\label{lemma:convex}
One has that $\det \D u^{(1)}>0$ a.e.\ in $\mathcal O$ and $\mathcal Q$ is convex if and only if $\det \D u^{(i)}>0$ for every $i=1,2,3$.
\end{lemma}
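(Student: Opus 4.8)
The plan is to reduce the statement to a sign analysis of twelve signed tetrahedral volumes and to match each sign with a local convexity condition at one edge of $\partial\mathcal Q$. Write $Q_j:=u(P_j)$ for the images of the six vertices. Each triangulation $\TT^{(i)}$ cuts $\mathcal O$ into four nondegenerate reference tetrahedra, on each of which $u^{(i)}$ is affine; hence $\det\D u^{(i)}>0$ a.e.\ holds if and only if the four image tetrahedra carry the same orientation as the corresponding reference ones, i.e.\ if and only if four associated signed volumes $[Q_aQ_bQ_cQ_d]:=\det(Q_b-Q_a,Q_c-Q_a,Q_d-Q_a)$ have the sign prescribed by the regular octahedron. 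Fixing these reference signs once and for all, the three triangulations produce exactly twelve signed volumes, one for each of the twelve edges of $\mathcal O$: the four tetrahedra of $\TT^{(3)}$ (cut along $P_5P_6$) correspond to the four equatorial edges $Q_1Q_2,Q_2Q_4,Q_4Q_3,Q_3Q_1$, while the tetrahedra of $\TT^{(1)}$ and $\TT^{(2)}$ account for the eight slant edges at $Q_2,Q_3$ and at $Q_1,Q_4$, respectively. I would first record this bookkeeping carefully, together with the elementary affine-dependence relation for five points in $\R^3$, which makes the signs of the three families mutually consistent.

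The geometric heart is the observation that each of these twelve signed volumes detects the local convexity of $\partial\mathcal Q$ along the corresponding edge. An edge $e$ of $\mathcal O$ is shared by exactly two facets, with endpoints $Q_a,Q_b$ and opposite vertices $Q_c,Q_d$; the solid $\overline{\mathcal Q}$ is locally convex at the image of $e$ (dihedral angle below $\pi$) precisely when $[Q_aQ_bQ_cQ_d]$ carries the reference sign, and it is flat or reflex when that volume vanishes or changes sign. For the equatorial edge $Q_1Q_2$ the two facets are $Q_1Q_2Q_5$ and $Q_1Q_2Q_6$, so the relevant volume is $[Q_1Q_2Q_5Q_6]$, a $\TT^{(3)}$-tetrahedron; for the slant edge $Q_5Q_1$ the facets are $Q_5Q_1Q_2$ and $Q_5Q_1Q_3$, so the relevant volume is $[Q_5Q_1Q_2Q_3]$, a $\TT^{(2)}$-tetrahedron, and symmetrically in the remaining cases. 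Thus positivity of all four tetrahedra of a given triangulation is equivalent to strict local convexity of $\partial\mathcal Q$ along the four edges assigned to it, and positivity of all three triangulations is equivalent to strict local convexity along all twelve edges.

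With this dictionary the two implications become short. For ``$\Leftarrow$'', if $\det\D u^{(i)}>0$ for every $i$ then $\partial\mathcal Q$ is a closed polyhedral surface of genus zero (the continuous piecewise-affine image of $\partial\mathcal O\cong S^2$) that is strictly locally convex along every edge; by the local-to-global principle for locally convex closed surfaces this forces $\partial\mathcal Q$ to be embedded and to bound the convex body $\overline{\mathcal Q}$, so $\mathcal Q$ is convex, while $\det\D u^{(1)}>0$ is contained in the hypothesis. For ``$\Rightarrow$'', convexity of $\mathcal Q$ yields local convexity along every edge, so all twelve signed volumes are nonzero and consistent with a single orientation of $\overline{\mathcal Q}$; the hypothesis $\det\D u^{(1)}>0$ pins this orientation down (it excludes the orientation-reversing convex configuration), and therefore all twelve volumes carry the reference sign, i.e.\ $\det\D u^{(i)}>0$ for every $i$.

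The step that requires genuine care — and which I expect to be the main obstacle — is the treatment of \emph{flat} edges. Convexity in the weak sense permits two adjacent facet images to be coplanar, in which case the corresponding tetrahedron degenerates and one of the three Jacobians vanishes on a set of positive measure even though $\mathcal Q$ is convex and $\det\D u^{(1)}>0$: placing $Q_5,Q_6,Q_1,Q_2$ in a common plane with $Q_3,Q_4$ strictly to one side already produces such a configuration. The clean equivalence therefore has to be read with $\mathcal Q$ strictly convex (equivalently, no two adjacent facet images coplanar), which is exactly the interpretation relevant to the non-interpenetration condition, cf.\ Remark~\ref{rmk:3d}. The second, milder, difficulty is the local-to-global convexity step in ``$\Leftarrow$'': one must ensure the immersed surface is actually embedded, which follows from strict edge convexity together with the spherical topology of $\partial\mathcal O$ via the polyhedral version of the Hadamard/van Heijenoort theorem (see, e.g., \cite{handbook-geometry}).
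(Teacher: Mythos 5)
Your proposal is correct in its overall architecture but follows a genuinely different route from the paper's. The paper argues through the three diagonals: assuming $\det\D u^{(1)}>0$ a.e., which forces the image facets to carry consistent outward normals, it observes that $\det\D u^{(2)}>0$ (resp.\ $\det\D u^{(3)}>0$) a.e.\ holds precisely when the diagonal $u(P_2)u(P_3)$ (resp.\ $u(P_5)u(P_6)$) is contained in $\mathcal Q$, and then invokes the fact that an octahedron is convex if and only if all three diagonals lie in its interior. You instead match the twelve image tetrahedra with the twelve edges of $\mathcal O$ and read each signed volume as the local convexity of $\partial\mathcal Q$ at the corresponding edge; this bookkeeping is accurate (the signed volume of $Q_1Q_2Q_5Q_6$ does detect the dihedral angle at $Q_1Q_2$, and the three triangulations do split the twelve edges into equatorial ones and the slant ones at $Q_2,Q_3$ and at $Q_1,Q_4$) and is arguably more transparent than the diagonal criterion. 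What your route buys is a uniform local test; what it costs is that the entire burden shifts onto the local-to-global step, and there your citation is not quite sufficient: the polyhedral Hadamard/van Heijenoort theorem requires local convexity at \emph{vertices}, and for immersed polyhedral surfaces convexity of all dihedral angles does not in general imply convexity of the vertex cones. For the octahedral combinatorics the gap can be closed — for instance, positivity of the four tetrahedra of $\TT^{(i)}$ sharing the image of the corresponding diagonal forces their dihedral angles at that common edge, each lying in $(0,\pi)$, to sum to exactly $2\pi$, so the diagonal is surrounded by $\overline{\mathcal Q}$ and all three diagonals lie in $\mathcal Q$, which lands you back on the paper's criterion — but as written this is the one step that is asserted rather than proved.

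Your closing observation about flat edges is well taken and applies equally to the paper's own argument: if, say, $Q_1,Q_2,Q_5,Q_6$ are coplanar with $Q_5,Q_6$ on opposite sides of the line $Q_1Q_2$, then $\mathcal Q$ is (weakly) convex and $\det\D u^{(1)}>0$ a.e., yet $\det\D u^{(3)}$ vanishes on a tetrahedron of positive measure and the diagonal $u(P_5)u(P_6)$ lies on $\partial\mathcal Q$ rather than in the open set $\mathcal Q$. Both proofs therefore implicitly read ``convex'' as excluding coplanar adjacent facets, which is the reading consistent with the non-interpenetration requirement discussed in Remark~\ref{rmk:3d}.
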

\begin{proof}
Assume that $\det \D u^{(1)} > 0$ a.e. This implies that $\mathcal Q$ is connected, the diagonal $u(P_1)u(P_4)$ is contained in $\mathcal Q$, and the outer normal vectors to the facets of $\mathcal Q$ point towards the outside of $\mathcal Q$.
Consider now the tetrahedra of the triangulation $\TT^{(2)}$: since the normals to the facets of $\mathcal Q$ point towards the outside of $\mathcal Q$,
it turns out that $\det \D u^{(2)}> 0$ a.e. if and only if the diagonal $u(P_2)u(P_3)$ is contained in $\mathcal Q$. The same holds for $\TT^{(3)}$ using the corresponding diagonal.
On the other hand, an octahedron is convex if and only if all the three diagonals are contained in the inner part of the octahedron itself.
\end{proof}
%
%
%
The octahedron satisfies an estimate corresponding to the one of Lemma \ref{lemma:tetra}.
%
\begin{lemma}\label{lemma:ottaedro}
There exists $C>0$ such that
\begin{equation}\label{rig-ottaedro}
\dist^2(\D u,SO(3)) \leq  C \!\!\sum_{\NN{P_i}{P_j}}( |\D u(P_iP_j)|-1)^2 \quad
\text{ a.e.\ in }\mathcal O\,,
\end{equation}
for every $u\in C^0(\mathcal O;\R^3)$ such that $u$ is piecewise affine
with respect to the triangulation determined by cutting $\mathcal O$ along the
diagonal $P_1P_4$,
$\det\D u>0$ a.e.\ in $\mathcal O$,
and $u(\mathcal O)$ is convex.
\end{lemma}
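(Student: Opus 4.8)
The plan is to reduce the statement to the tetrahedral estimate of Lemma~\ref{lemma:tetra} applied cell by cell, using convexity and orientation only to control the single edge of each cell that is \emph{not} a nearest-neighbour bond, namely the diagonal $P_1P_4$. Write $q_i:=u(P_i)$ and let $\delta$ collect the twelve bond deviations $\delta_{ij}:=|q_i-q_j|-1$, so that the right-hand side of \eqref{rig-ottaedro} equals $|\delta|^2$. The triangulation $\TT^{(1)}$ splits $\mathcal O$ into the four (non-regular) tetrahedra $P_1P_2P_4P_5$, $P_1P_3P_4P_5$, $P_1P_2P_4P_6$, $P_1P_3P_4P_6$, on each of which $\D u$ is a constant matrix $F_m\in GL^+(3)$; since $\D u$ is piecewise constant, the estimate is to be proved on each cell separately. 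Five of the six edges of each cell are nearest-neighbour bonds, the sixth being the shared diagonal $q_1q_4$, of length $\sqrt2$ in the reference. I would first record the analogue of Lemma~\ref{lemma:tetra} for an arbitrary non-degenerate tetrahedron; its proof is verbatim the one given above, since the six edge vectors of the cell span $\R^3$ and its six edge lengths determine it up to rigid motion, so that $\dist^2(F_m,SO(3))$ is a function of the six edge-deviations vanishing together with its gradient at the origin. This yields, on each cell,
\be
\dist^2(F_m,SO(3))\le C\Big(\sum_{\text{$5$ bonds of cell}}\delta_{ij}^2+\big(|q_1-q_4|-\sqrt2\big)^2\Big),
\ee
so everything reduces to bounding the diagonal term $(|q_1-q_4|-\sqrt2)^2$ by $|\delta|^2$.

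The diagonal cannot be controlled cell by cell: fixing only the five bond lengths of a single tetrahedron leaves the diagonal free (this is exactly the ``compression'' flex noted in Remark~\ref{rmk:3d}), so the global convexity and orientation constraints are indispensable. I would argue by cases. When $|\delta|\ge\delta_0$ for a fixed $\delta_0>0$, the triangle inequality along $q_1,q_2,q_4$ gives $|q_1-q_4|\le 2+|\delta_{12}|+|\delta_{24}|$, whence $(|q_1-q_4|-\sqrt2)^2\le C(1+|\delta|^2)\le C\,\delta_0^{-2}|\delta|^2$; together with the elementary bound $|F_m|\le C(1+|\delta|)$ (as $F_m$ sends the fixed reference edge-basis to the deformed edges), this settles \eqref{rig-ottaedro} in the large-deviation regime, without using convexity.

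The delicate regime is $|\delta|\le\delta_0$, and it is the main obstacle. By Lemma~\ref{lemma:convex}, the hypotheses $\det\D u^{(1)}>0$ and $u(\mathcal O)$ convex give $\det\D u^{(i)}>0$ for every $i$, so $u(\mathcal O)$ is a genuinely convex octahedron, with all three diagonals interior and the same combinatorial type as $\mathcal O$. The reference octahedron, being a convex polytope with triangular facets, is infinitesimally rigid; hence the map sending a configuration of the six vertices (modulo translations and rotations) to the twelve squared edge-lengths is a local diffeomorphism near the reference. Consequently, on the branch selected by convexity and by $\det>0$ (which excludes reflected and folded configurations), the positions $q_i$, and therefore each $F_m$, are smooth functions $F_m=F_m(\delta)$ of the bond deviations, with $F_m(0)\in SO(3)$. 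Exactly as at the end of the proof of Lemma~\ref{lemma:tetra}, the function $f_m(\delta):=\dist^2(F_m(\delta),SO(3))$ then satisfies $f_m(0)=0$ and $\nabla f_m(0)=0$ (the squared distance to $SO(3)$ attains its minimum, zero, on $SO(3)$), so $f_m(\delta)\le C|\delta|^2$ for $|\delta|$ small. Note that this route controls the diagonal automatically, without a separate estimate.

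The one point genuinely requiring care—the heart of the argument—is that convexity must indeed confine us to this branch, i.e.\ that a convex, orientation-preserving octahedron with nearly unit edges is close, modulo isometry, to the regular one. I would establish this by a compactness argument based on Cauchy's rigidity theorem (the regular octahedron is, up to isometry, the unique convex octahedron with unit edges): if admissible configurations with $|\delta|\to0$ failed to converge to the reference modulo rigid motion, a limiting convex octahedron with unit edges distinct from the regular one would contradict Cauchy's theorem. Thus, for $\delta_0$ small, every admissible configuration with $|\delta|\le\delta_0$ lies in the neighbourhood where the implicit function theorem applies. Combining the two regimes and taking the maximum over the four cells yields \eqref{rig-ottaedro} a.e.\ in $\mathcal O$.
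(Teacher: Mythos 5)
Your proof is correct and shares the paper's overall skeleton --- split $\mathcal O$ into the four tetrahedra of $\TT^{(1)}$, apply the tetrahedral estimate with the diagonal $u(P_1)u(P_4)$ as the one uncontrolled edge, and dispose of large $|\delta|$ by the triangle inequality --- but the heart of the argument is handled by a genuinely different mechanism. The paper makes the dependence of the diagonal on the twelve bond lengths explicit: it introduces $|u(P_1P_4)| = f(l_1,\dots,l_{12})$, asserts $f(1+\delta_1,\dots,1+\delta_{12}) = \sqrt2 + O(|\delta|)$, and spends most of the proof verifying differentiability of $f$ at $(1,\dots,1)$ by computing $\partial_3 f$ and $\partial_4 f$ through explicit trigonometry (the projection onto the plane of $Q_2,Q_3,Q_5,Q_6$, the angles $\alpha$ and $\gamma$), invoking symmetry for the remaining partial derivatives. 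You instead obtain smooth dependence of the whole configuration --- hence of each $F_m$ and of the diagonal --- on $\delta$ abstractly, from infinitesimal rigidity of convex polytopes with triangular facets plus the inverse function theorem, and you add a compactness/Cauchy argument to show that convexity and orientation confine small-$|\delta|$ configurations to the relevant branch, a localization step the paper leaves implicit. Your route is cleaner, avoids the computation, and generalizes to any infinitesimally rigid convex cell; its costs are (i) reliance on Dehn's infinitesimal rigidity theorem, which is strictly stronger than the Cauchy uniqueness statement the paper quotes (uniqueness alone would only give that the inverse of the edge-length map is well defined, not that it is differentiable), and (ii) one small point that should be made explicit in the compactness step: a limit of admissible configurations with unit edges cannot degenerate, e.g.\ become planar, because no point of the plane lies at distance one from all four vertices of a unit rhombus; with that observed, Cauchy's theorem applies to the limit and the argument closes.
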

\begin{proof}
Let  $\chi_i$, $i=1,\dots,4$, be the characteristic functions of the
four tetrahedra $T_1:=P_1P_2P_4P_5$, $T_2:=P_1P_2P_4P_6$,
$T_3:=P_1P_3P_4P_5$, $T_4:=P_1P_3P_4P_6$,
respectively.
Since $\nabla u = \sum_{i=1}^4 \chi_i F_i$ for some $F_i\in GL^+(3)$, it suffices to prove \eqref{rig-ottaedro} in each tetrahedron.
Notice that
$P_1$ and $P_4$ are not nearest neighbours
and therefore we cannot directly apply Lemma~\ref{lemma:tetra}.
On the other hand, the length of  $u(P_1P_4)$, which is a common edge of the four  deformed
tetrahedra, can be expressed as a function of all the edges  of $u(\mathcal  O)$, the latter being
a (possibly irregular) octahedron. Specifically, from the rigidity of convex octahedra, it follows that there exists a function $f$ such that
$$
|u(P_1P_4)| = f(l_1,\dots,l_{12})\,,
$$
where $l_i$, $i=1,\dots ,12$, are the lengths of the twelve edges of $u(\mathcal  O)$.
In particular we set
$$
l_1:=|u(P_1P_2)|\,, \,\,
l_2:=|u(P_2P_4)|\,, \,\,
l_3:=|u(P_2P_5)|\,, \,\,
l_4:=|u(P_1P_5)|\,, \,\,
l_5:=|u(P_4P_5)|\,\,.
$$
The explicit formula of $f$ is not important.
Let $\delta_i:= l_i-1$ for
$i=1,\dots ,12$, and $\delta_0:=|u(P_1P_4)|- \sqrt{2}$.  We claim that $f$ is differentiable at
$(1,\dots,1)$.  Then
$$
f(1+\delta_1,\dots,1+\delta_{12}) = \sqrt{2} + O(|\delta|), \quad\text{ with }\delta=(\delta_1,\dots,\delta_{12})\,,
$$
which yields, in combination with Lemma \ref{lemma:tetra}, the following inequality for $\nabla u = F_1$ on the tetrahedron $T_1$:
\begin{align*}
\dist^2(F_1,SO(3)) &\leq C  \left(\sum_{i=1}^5 (l_i-1)^2 + \left(|u(P_1P_4)|- \sqrt{2}\right)^2\right) =   C \sum_{i=0}^5 \delta_i^2 \\
&= C  \sum_{i=1}^5 \delta_i^2 +
C\Big(f(1+\delta_1,\dots,1+\delta_{12}) - \sqrt{2}\Big)^2 \leq
C|\delta|^2
\end{align*}
%
for $|\delta|\leq 1$. On the other hand, by the triangle inequality, we have for $|\delta|> 1$
$$
\delta_0^2 \leq
2(f^2(1+\delta_1,\dots,1+\delta_{12}) + 2)\leq 4(l_1^2 + l_2^2 + 2)
\leq C'|\delta|^2\,.
$$
The inequality for the other $T_i$'s is completely analogous.
\par
We are left to show that $f$ is differentiable at $(1,\dots,1)$. To this end, we prove
the existence and continuity of all its partial derivatives at $(1,\dots,1)$.
By a symmetry argument, it is enough to study the existence and continuity of
$\partial_3 f$ and $\partial_4 f$ (with reference to Figure~\ref{fig:piramide}).
\par
%
%
\begin{figure}
\centering
\psfrag{q1}{$Q_1$}
\psfrag{q2}{$Q_2$}
\psfrag{q3}{\hspace{-1mm}$Q_3 $}
\psfrag{q4}{$Q_4$}
\psfrag{q5}{$Q_5$}
\psfrag{q6}{$Q_6$}
\subfloat[]{
\includegraphics[width=.4\textwidth]{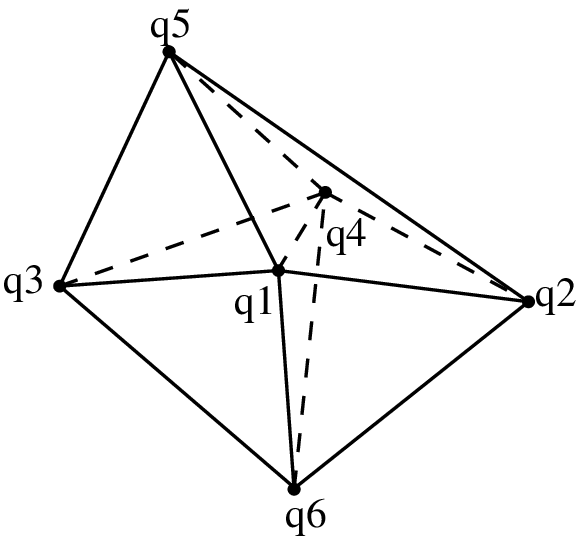}
\label{fig:piramide}
}
\hspace{.12\textwidth}
\psfrag{o}{$O$}
\psfrag{g}{$\gamma$}
\subfloat[]{
\includegraphics[width=.39\textwidth]{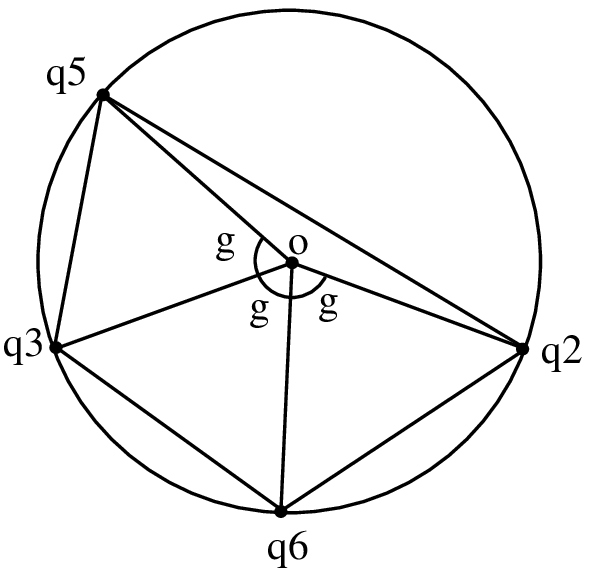}
\label{fig:proiezione}
}
\caption{(a) The image of $\O$ through a piece-wise affine map $u$ such that
$l_i=1$ for each $i\neq 3$.
(b) The projection of $u(\O)$ on the plane $p$, where $O=\Pi(Q_1)=\Pi(Q_4)$.}
\label{fig:entrambi}
\end{figure}
%
%
%
We begin with $\partial_3 f$.
Let $u(P_i)=Q_i$ be such that $l_i=1$ for each $i\neq 3$ and $l_3\neq 1$.
Since
$f^2(1,1,l_3,1,\dots ,1)= 2-2\cos\alpha$,
where $\alpha$ is the acute angle formed by $Q_1Q_2$ and $Q_2Q_4$,
$f(1,1,l_3,1,\dots ,1)$ is a smooth function of $\alpha$ for $0<\alpha<\pi$.
Next remark that the points $Q_2,Q_3,Q_5,Q_6$ are coplanar;
let $p$ denote the plane containing them and $\Pi$ be the orthogonal projection onto $p$
(see Figure~\ref{fig:proiezione}).
Considering the projections we see that
$$
|Q_2\Pi(Q_1)|=|Q_3\Pi(Q_1)|=|Q_5\Pi(Q_1)|=|Q_6\Pi(Q_1)|=\cos\tfrac{\alpha}{2}\,.
$$
Let $\gamma$ denote the acute angle formed by $Q_2\Pi(Q_1)$ and $Q_6\Pi(Q_1)$ (which
is equal to that formed by $Q_6\Pi(Q_1)$ and $Q_3\Pi(Q_1)$ and that formed by
$Q_3\Pi(Q_1)$ and $Q_5\Pi(Q_1)$).
Then, by the cosine formula
\begin{equation}\label{formula:gamma}
\cos\gamma = 1- \frac{1}{2\cos^2\frac{\alpha}{2}}
\end{equation}
and
\begin{equation}\label{quasiformula:l3}
|Q_2Q_5|^2= 2\cos^2\tfrac{\alpha}{2}\Big(1- \cos(2\pi - 3\gamma)  \Big).
\end{equation}
Combining \eqref{formula:gamma} and \eqref{quasiformula:l3} we obtain
\begin{equation*}
l_3:= |Q_2Q_5|=
8\cos^2\tfrac{\alpha}{2} - 3 - \frac{\Big( 2\cos^2\frac{\alpha}{2} - 1\Big)^3}{\cos^4\frac{\alpha}{2}}
= 3 -\frac{1}{\cos^2\frac{\alpha}{2}} \,.
\end{equation*}
Remark that since $\alpha<\pi$, $l_3$ is a smooth function of $\alpha$.
Moreover, since the derivative $\displaystyle \frac{\d l_3}{\d\alpha}$ is not zero at the point $\alpha = \pi/2$, by the implicit function theorem it follows that $l_3$ is an invertible function of $\alpha$ in a
neighbourhood of $\alpha = \pi/2$, and its inverse is smooth in a neighbourhood of $l_3=1$.
The differentiability of $f$ with respect to $l_3$ at $(1,\dots,1)$ then follows from its smooth dependence on $\alpha$.
\par
Finally, proving the existence and continuity of $\partial_4 f$ at $(1,\dots,1)$ is equivalent to
proving that the length of  $Q_3Q_2$ is a smooth function of $l_3$ in a neighbourhood of $(1,\dots,1)$.
The latter follows equivalently to the previous argument taking into account that
$\displaystyle|Q_3Q_2|=2\cos\tfrac{\alpha}{2}$.
\end{proof}
\begin{remark}
Estimates  \eqref{rig-tetraedro} and \eqref{rig-ottaedro} are crucial in the proof of the compactness of sequences of deformations with equibounded energy, as well as in the study of the
$\Gamma$-limit and its scaling properties (see Theorem \ref{thm3} and
Proposition \ref{lowerbound}).
Indeed, as already remarked, each of the lattices introduced in Section \ref{3d} defines a tessellation
of the space into tetrahedra and octahedra.
This allows us to deduce the following lower bounds on the energy $\E_{\eps}^{\lambda}$:
\begin{align}
\label{ineq1}
\dist^2(\nabla u_\eps, SO(3)) & \leq C \E_{\eps}^{\lambda}(u_\eps) \text{ a.e. in }\Om_{k\eps}^- \\
\label{ineq2}
\dist^2\Big(\nabla u_\eps, \tfrac{\lambda}{\rho}SO(3)\Big) &  \leq C \E_{\eps}^{\lambda}(u_\eps)
\text{ a.e. in }\Om_{k\eps}^+\,,
\end{align}
for each admissible deformation $u_\eps$.
Observe, in particular, that in the case of the diamond cubic lattice the above inequalities
are obtained by first neglecting in the energy the bonds between atoms of the sublattice $\L\dc{_2}$
and then applying \eqref{rig-tetraedro} and \eqref{rig-ottaedro} on the tessellation of the space thus
defined.
\par
Inequalities \eqref{ineq1}--\eqref{ineq2} imply,
via the rigidity estimate \eqref{rigidity}, that $\nabla u_\eps$ is locally
close to $SO(3)$ in $\Om_{k\eps}^- $ and to  $ \frac{\lambda}{\rho}SO(3)$ in $\Om_{k\eps}^+ $.
\end{remark}
%
%
%
\section{Dimension reduction and scaling properties of the $\Gamma$-limit} \label{sec:4}
In the present section we show the results in the three dimensional setting  that
were obtained in two dimensions in our previous paper \cite{LaPaSc2014}.
The proofs of these results follow the lines of those in \cite{LaPaSc2014} by application of
Lemmas \ref{lemma:tetra} and \ref{lemma:ottaedro}. We will therefore omit further details of the proofs here.
 \par
Given $R\in SO(3)$, $\rho\in(0,1]$, and $k\in\N$, we define the
minimum cost of a transition from an equilibrium in $\Om_{k,\infty}$ with $\xi_1 < 0$ to an equilibrium in $\Om_{k,\infty}$ with $\xi_1>0$ as
\begin{equation*}
\begin{split}
\ga^\lambda(\rho,k,R):=\inf\big\{ & \E_{\infty}^\lambda(v,\rho,k) \colon M>0\,,
\  v\in \A_{\rho,\infty}(\Om_{k,\infty})\,, \\
& \D v=I \ \text{for} \ x_1\in(-\infty,-M)\,,\, \ \D v=\tfrac\lambda\rho R \ \text{for} \ x_1\in(M,+\infty)
\big\}\,,
\end{split}
\end{equation*}
where $\E_{\infty}^\lambda$ is defined in \eqref{energia-infinita}.
\begin{remark}
In fact it can be proved that $\ga^\lambda(\rho,k,R)$ does not depend on the rotation $R$
(see \cite[Proposition 2.4]{LaPaSc2014}). We therefore set
$$
\ga^\lambda(\rho,k):=\ga^\lambda(\rho,k,I)\,.
$$
\end{remark}
The function $\ga^\lambda(\rho,k)$, which depends on the number of planes of atoms of
the two lattices $\L_1^-$ and $\L_\rho^+$ contained in the domain $\overline\Om_{k,\infty}$,
is in fact the relevant quantity that describes the system when $\eps$ tends to zero. More precisely,
our goal is to show that for $k$ sufficiently large, there holds
$$
\inf_{\rho\in(0,1)} \ga^\lambda(\rho,k)< \ga^\lambda(1,k)\,,
$$
i.e., the system displays dislocations.
In order to prove this,
we perform a dimension reduction with respect to the directions $\wu_2$, $\wu_3$.
To this end, for each $u_\e\in \A_{\rho,\eps}(\Om_{k\eps})$ we define the rescaled deformation
$$
\ut_\e(x):=u_\e (A_\e x)  \,,
$$
where $A_\e$ is the matrix defined by
\begin{equation*}
\begin{cases}
A_\e \wu_1 =  \wu_1\\
A_\e \wu_2 = \e \wu_2\\
A_\e \wu_3 = \e \wu_3 \,.
\end{cases}
\end{equation*}
This yields a scaling of the domain $\Om_{k\e}$ to $\Om_{k}$, which is independent of $\e$.
For fixed $\rho\in(0,1]$ and $k\in\N$ we address the question of the $\Gamma$-convergence of the sequence of functionals
$\{\I_\e\}$ defined by
$$
\I_{\e}(\tilde u_\e) := \E^\lambda_\e(u_\e,\rho,k) \quad \text{for}\ \ut_\e
\in \widetilde\A_{\rho,\e}(\Om_{k}) \,,
$$
where $ \widetilde\A_{\rho,\e}(\Om_{k})$ is the corresponding set of admissible deformations, i.e.
\bes 
\begin{split}
\widetilde\A_{\rho,\eps}(\Om_{k}):= \big\{ \ut_\eps\in C^0(\overline\Om_{k}; \R^3) \colon & \ut_\eps \ \text{piecewise affine,}\\
& \D \ut_\eps \ \text{constant on}\ \Om_{k}\cap (A_\eps^{-1}T)\ \forall\, T\in\TT_{\rho,\eps}^{(1)}\,, \\
& \det \D\ut_\eps>0 \ \text{a.e.\ in}\ \Om_{k}\,,\\
& \ut_\eps(A_\eps^{-1}\mathcal P) \ \text{is convex}\ \forall\, \mathcal P\in\TT_{\rho,\eps} \big\} \,.
\end{split}
\ees
%
%
\begin{theorem}\label{thm3}
Let $\{\ut_\e\}\subset \widetilde\A_{\rho,\eps}(\Om_{k})$ be a
sequence such that
\begin{equation*} 
\limsup_{\e\to 0^{+}} \I_{\e}(\ut_\e) \leq C \,.
\end{equation*}
Then there exists a subsequence (not relabeled) such that
\begin{equation*}
\D \ut_\e A_\e^{-1}\weakst (\partial_{\wu_1} \ut \, | \, d_2 \, | \, d_3) \quad \text{weakly* in}\ L^{\infty}(\Om_k;\M^{3\times 3})\,,
\end{equation*}
where the functions $\ut \in W^{1,\infty}(\Om_k;\R^3)$, $d_2, d_3\in L^{\infty}(\Om_k;\R^3)$
are independent of $\wu_2$ and $\wu_3$, i.e.,
$\partial_{\wu_i}\ut = \partial_{\wu_i} d_2 =  \partial_{\wu_i} d_3= 0$, for $i=2,3$.
Moreover,
\begin{equation*} 
 (\partial_{\wu_1} \ut \, | \, d_2 \, | \, d_3)
 \in
\begin{cases}
\co(SO(3))     &  \text{a.e.\ in}\ \Om_k^-\,,\\
\co(\frac\lambda\rho SO(3))   &  \text{a.e.\ in}\ \Om_k^+\,.
\end{cases}
\end{equation*}
The sequence of functionals $\{  \I_{\e}\}$
$\Gamma$-converges, as $\e\to 0^{+}$, to the functional
\begin{equation*}
\I(u)=
\begin{cases}
\ga^\lambda(\rho,k)   &  \text{if}\ u\in\A\,,\\
+\infty   &  \text{otherwise,}
\end{cases}
\end{equation*}
with respect to the weak* convergence in $W^{1,\infty}(\Om_k;\R^3)$, where
\begin{equation*}
\begin{split}
\A:=\big\{
u\in W^{1,\infty}(\Om_k;\R^3)\colon &
\partial_{\wu_2} u  = \partial_{\wu_3} u =0 \ \text{a.e.\ in}\ \Om_k\,, \\
& |\partial_{\wu_1} u |\leq 1 \ \text{a.e.\ in}\ \Om_k^- \,,
|\partial_{\wu_1} u |\leq \tfrac\lambda\rho \ \text{a.e.\ in}\ \Om_k^+
\big\} \,.
\end{split}
\end{equation*}
\end{theorem}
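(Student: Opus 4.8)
The plan is to follow the three assertions of the statement in turn — compactness, the constraint on the limit matrix, and the two $\Gamma$-convergence inequalities — reducing each to the discrete rigidity of Lemmas~\ref{lemma:tetra}--\ref{lemma:ottaedro} and the continuum estimate of Theorem~\ref{thm-rigidity}, exactly along the lines of the two-dimensional analysis of \cite{LaPaSc2014}. I first unscale, writing $u_\e(y)=\ut_\e(A_\e^{-1}y)$, so that $\D\ut_\e\,A_\e^{-1}=(\D u_\e)\circ A_\e$ is the physical gradient. On every tetrahedron or octahedron $T$ of $\TT_{\rho,\e}^{(1)}$ the gradient is constant, and Lemmas~\ref{lemma:tetra}--\ref{lemma:ottaedro} bound $\dist^2(\D u_\e|_T,SO(3))$ on $\Om_{k\e}^-$ (and $\dist^2(\D u_\e|_T,\tfrac\lambda\rho SO(3))$ on $\Om_{k\e}^+$) by the energy carried by the bonds of $T$; see \eqref{ineq1}--\eqref{ineq2}. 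Since that cell energy is dominated by the total energy $\E_\e^\lambda(u_\e)\le C$, the distance of $\D u_\e$ from $SO(3)$ (resp.\ $\tfrac\lambda\rho SO(3)$) is bounded \emph{pointwise}, and as $SO(3)$ is compact this gives a uniform $L^\infty$ bound on $\D\ut_\e\,A_\e^{-1}$, hence weak-$*$ compactness of a subsequence. Because $A_\e$ contracts the transverse directions by the factor $\e$, one has $\partial_{\wu_i}\ut_\e=\e\,\big((\D u_\e)\circ A_\e\big)\wu_i=O(\e)$ for $i=2,3$, so $\ut$ is independent of $\wu_2,\wu_3$. For the same independence of $d_2,d_3$ I would apply Theorem~\ref{thm-rigidity} on transverse slabs of fixed shape $1\times k\times k$ (using its dilation invariance) together with \eqref{ineq1}--\eqref{ineq2}: this produces a rotation field $R_\e=R_\e(x_1)$ depending only on the longitudinal variable with $\D u_\e\approx R_\e$, whence $\partial_{\wu_i}d_2=\partial_{\wu_i}d_3=0$ in the limit.

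For the constraint I would sum the cellwise rigidity estimates and weight by the rescaled cell volume, which is of order $\e$: this gives $\int_{\Om_k^-}\dist^2\big((\D u_\e)\circ A_\e,SO(3)\big)\le C\e\,\E_\e^\lambda(u_\e)\to0$, and likewise on $\Om_k^+$ with $\tfrac\lambda\rho SO(3)$. Thus the rescaled gradients converge to $SO(3)$ (resp.\ $\tfrac\lambda\rho SO(3)$) in $L^2$-distance; since a weak-$*$ limit of maps whose distance to a compact set tends to zero lies a.e.\ in its closed convex hull, the limit matrix belongs to $\co(SO(3))$ on $\Om_k^-$ and to $\co(\tfrac\lambda\rho SO(3))$ on $\Om_k^+$. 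This is exactly the membership $\ut\in\A$, the longitudinal bounds $|\partial_{\wu_1}\ut|\le1$ on $\Om_k^-$ and $|\partial_{\wu_1}\ut|\le\tfrac\lambda\rho$ on $\Om_k^+$ being inherited from the convex hull.

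For the $\liminf$ inequality, if $\ut_\e\weakst u$ with $\liminf_\e\I_\e(\ut_\e)<\infty$ then the compactness above forces $u\in\A$, so $u\notin\A$ yields $\liminf_\e\I_\e=+\infty$. For $u\in\A$ I would localise the energy at the interface: blowing up by $1/\e$ turns $u_\e$ into a deformation of the wire $(-L/\e,L/\e)$, whose cross-section is precisely that of $\Om_{k,\infty}$ and on which the discrete energy \eqref{energia-infinita} is invariant under this relabelling. Selecting two transverse slices far out where the local energy is small, rigidity lets me replace $u_\e$ beyond them by the affine maps $x\mapsto R_\e^\pm x$ carrying the slab rotations, at a cost $o(1)$; composing with $(R_\e^-)^{-1}$ normalises the left gradient to $I$ and the right to $\tfrac\lambda\rho(R_\e^-)^{-1}R_\e^+$. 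The resulting map competes in the problem defining $\ga^\lambda$, and since $\ga^\lambda(\rho,k,R)$ does not depend on $R$ this gives $\E_\e^\lambda(u_\e)\ge\ga^\lambda(\rho,k)-o(1)$. For the recovery ($\limsup$) inequality I would fix $\eta>0$, take a near-optimal profile $v$ with $\E_\infty^\lambda(v,\rho,k)\le\ga^\lambda(\rho,k)+\eta$ and $\D v=I$, $\D v=\tfrac\lambda\rho R$ outside $[-M,M]$, insert $\e\,v(\cdot/\e)$ in the $\e$-neighbourhood of the interface, and extend it into the two bulk phases. There one must realise the possibly compressed limit $\partial_{\wu_1}u$ as a weak-$*$ limit of cellwise rotation-valued — hence asymptotically zero-energy — admissible deformations, through an oscillating laminate construction as in \cite{LaPaSc2014}; gluing continuously to $v$ and letting $\eta\to0$ diagonally yields $\limsup_\e\I_\e(\ut_\e)\le\ga^\lambda(\rho,k)$.

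The main obstacle is the $\liminf$ matching at the interface: one must show that the energy genuinely concentrates there and is bounded below by the one-dimensional transition cost $\ga^\lambda(\rho,k)$. This hinges on the good-slice selection and the rigidity-based clamping producing a competitor admissible for $\ga^\lambda$ at only $o(1)$ extra energy, and it uses crucially the frame indifference $\ga^\lambda(\rho,k,R)=\ga^\lambda(\rho,k,I)$. The parallel difficulty in the $\limsup$ step is to build the bulk microstructure attaining the relaxed (compressed) gradients at vanishing energy while preserving the non-interpenetration and octahedron-convexity constraints of \eqref{ad-3d}, the latter being guaranteed cellwise by Lemma~\ref{lemma:convex}.
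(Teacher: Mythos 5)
Your proposal is correct and takes essentially the same route as the paper, which itself omits the details and only states that the proof follows the lines of the two-dimensional argument of \cite{LaPaSc2014}, with Lemmas~\ref{lemma:tetra} and~\ref{lemma:ottaedro} (through the cellwise bounds \eqref{ineq1}--\eqref{ineq2}) and Theorem~\ref{thm-rigidity} supplying the rigidity input. Your fleshing out of the compactness step, the convex-hull constraint, the interface localisation and frame-indifference argument for the $\liminf$, and the laminate-based recovery sequence matches that blueprint.
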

%
%
\begin{remark} \label{rmk5}
As in the two-dimensional case, the limit of a sequence of discrete deformations with equibounded energy
does not depend on the triangulation chosen for the octahedra, see \cite[Remark 3.4]{LaPaSc2014}.
Moreover, the $\Gamma$-limit does not depend on the choice of the triangulation $\TT_{\rho,\eps}^{(1)}$ in \eqref{ad-3d},
since its formula only depends on the discrete values of the deformation, and not on its extension to the three-dimensional continuum.  Similarly, it does not depend on the choice of the tessellation of
$\L\dc{}$.
\end{remark}
The next two results characterise the behaviour of the $\Gamma$-limit in the
dislocation-free case and in the case when dislocations are present.
\begin{proposition}[Estimate in the defect-free case, $\rho=1$]\label{lowerbound}
There exist $C_1,C_2>0$ such that for every $k\in\N$
\begin{equation*}
C_1 k^3 \leq \ga^\lambda(1,k) \leq C_2 k^3 \,.
\end{equation*}
\end{proposition}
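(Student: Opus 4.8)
The plan is to prove the two inequalities separately. The mechanism behind the $k^3$ scaling is that the prescribed far-field gradients $I$ and $\tfrac\lambda\rho I=\lambda I$ are \emph{not} rank-one connected, so the transition from $\D v=I$ to $\D v=\lambda I$ cannot be concentrated on the two-dimensional interface but must be spread over a region whose axial extent is comparable to the transverse size $k$; the energy density there is of order $(1-\lambda)^2$, and the volume is of order $k\cdot k^2=k^3$. For the upper bound I would exhibit an explicit competitor. Writing a point as $x=\xi_1\wu_1+\xi_2\wu_2+\xi_3\wu_3$, choose smooth scalars $\alpha,\beta$ of $\xi_1$ with $\alpha',\beta>0$, equal to $(1,1)$ for $\xi_1\le-k$ and to $(\lambda,\lambda)$ for $\xi_1\ge k$ and interpolating monotonically in between, and let $v$ be the piecewise affine interpolant of the map $\xi_1\wu_1+\xi_2\wu_2+\xi_3\wu_3\mapsto\alpha(\xi_1)\wu_1+\beta(\xi_1)(\xi_2\wu_2+\xi_3\wu_3)$. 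Outside the slab $\{|\xi_1|<k\}$ one has $\D v=I$, resp.\ $\D v=\lambda I$, so the far-field conditions hold with $M=k$ and those bonds sit at rest length; since $\det\D v=\alpha'\beta^2>0$ the map is an orientation-preserving diffeomorphism and is admissible (convexity of the images of the octahedra is inherited because on each fixed cell the map is $C^1$-close to a dilation once $k$ is large). Inside the slab the nontrivial off-diagonal entries $\beta'(\xi_1)\xi_j$ are bounded by $\tfrac{1-\lambda}{k}\cdot k=O(1-\lambda)$, so every bond length deviates from its rest length by $O(1-\lambda)$; as the slab contains $O(k^3)$ bonds, summing gives $\E_\infty^\lambda(v,1,k)\le C(1-\lambda)^2k^3$, i.e.\ $\ga^\lambda(1,k)\le C_2k^3$. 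The same family with transition length $\ell$ gives energy of order $(1-\lambda)^2\ell k^2$ subject to $\ell\ge k$ (to keep the gradient bounded), which already indicates that $\ell\sim k$ is optimal.

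For the lower bound let $v$ be any admissible competitor and set $E:=\E_\infty^\lambda(v,1,k)$. Consider the cubes $Q_-:=\{\xi_1\in(-k,0),\,\xi_2,\xi_3\in(0,k)\}$ and $Q_+:=\{\xi_1\in(0,k),\,\xi_2,\xi_3\in(0,k)\}$ adjacent to the interface $\Sigma:=\{\xi_1=0,\,\xi_2,\xi_3\in(0,k)\}$. Summing the cell-wise estimates of Lemmas \ref{lemma:tetra} and \ref{lemma:ottaedro} (equivalently \eqref{ineq1}--\eqref{ineq2}, noting $\tfrac\lambda\rho=\lambda$) over the cells in $Q_\mp$ yields
\be
\int_{Q_-}\dist^2(\D v, SO(3)) \le CE, \qquad \int_{Q_+}\dist^2(\D v, \lambda SO(3)) \le CE.
\ee
Applying the rigidity estimate \eqref{rigidity} on $Q_-$, and on $Q_+$ to $\lambda^{-1}v$ — with a scale-invariant constant, hence independent of $k$ — produces $R,S\in SO(3)$ with $\int_{Q_-}|\D v-R|^2\le CE$ and $\int_{Q_+}|\D v-\lambda S|^2\le CE$. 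By Poincaré on cubes of side $k$ there are $a,b\in\R^3$ with $\int_{Q_-}|v-(Rx+a)|^2\le Ck^2E$ and $\int_{Q_+}|v-(\lambda Sx+b)|^2\le Ck^2E$. The trace inequality on a cube of side $k$, scaling as $\|g\|_{L^2(\Sigma)}^2\le C(k^{-1}\|g\|_{L^2(Q)}^2+k\|\D g\|_{L^2(Q)}^2)$ applied with $g=v-(Rx+a)$ and with $g=v-(\lambda Sx+b)$, gives $L^2(\Sigma)$ bounds of order $kE$. Since $v$ is single-valued on $\Sigma$, the triangle inequality yields
\be
\int_\Sigma |(R-\lambda S)x + (a-b)|^2 \,\d\mathcal{H}^2(x) \le CkE.
\ee

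At this point I would invoke the \emph{quantitative incompatibility} of the two wells: for every $w\in\R^3$, $|(R-\lambda S)w|\ge|Rw|-\lambda|Sw|=(1-\lambda)|w|$, so $R-\lambda S$ is bounded below by $1-\lambda$ for all rotations $R,S$. Bounding the left-hand side below by its minimum over constant shifts (attained at the mean $\bar x_\Sigma$) and using this estimate,
\be
CkE \ge \int_\Sigma|(R-\lambda S)(x - \bar x_\Sigma)|^2 \ge (1-\lambda)^2\int_\Sigma |x-\bar x_\Sigma|^2 \ge c_0(1-\lambda)^2 k^4,
\ee
since $\Sigma$ has area of order $k^2$ and diameter of order $k$. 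Dividing by $k$ gives $E\ge C_1k^3$ with $C_1=c_0(1-\lambda)^2/C$ independent of $k$, which is the claimed lower bound.

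The upper bound is routine once the transition length is guessed to be of order $k$; the crux is the lower bound, and its two delicate points are: (i) applying the rigidity estimate only on cubes of the transverse size $k$, so that its constant stays uniform in $k$ — using it on the full slender wire would destroy the constant; and (ii) converting the incompatibility of $SO(3)$ and $\lambda SO(3)$ into the $k^4$ interfacial lower bound through the trace inequality together with the variance (Poincaré) estimate on $\Sigma$. The elementary bound $|(R-\lambda S)w|\ge(1-\lambda)|w|$, valid for \emph{all} rotations, is exactly what frees the argument from the particular orientations selected by rigidity and makes the estimate robust.
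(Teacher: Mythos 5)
Your proof is correct and follows essentially the same route as the paper, which itself only refers to the two-dimensional counterpart \cite[Proposition 2.5]{LaPaSc2014}: an explicit transition layer of axial width of order $k$ for the upper bound, and for the lower bound the cell-wise estimates of Lemmas \ref{lemma:tetra} and \ref{lemma:ottaedro}, the rigidity estimate \eqref{rigidity} on boxes of side $k$ adjacent to the interface, Poincar\'e and trace inequalities, and the incompatibility $|(R-\lambda S)w|\ge(1-\lambda)|w|$ of the wells $SO(3)$ and $\lambda SO(3)$. The only point treated loosely is the admissibility (convexity of the deformed octahedra) of your competitor for small $k$, which is harmless since for bounded $k$ any fixed admissible transition already gives $\ga^\lambda(1,k)\le C\le C_2k^3$.
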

\begin{proposition}[Estimate for $\rho=\lambda$] \label{prop:lambda}
There exist positive constants $C_1',C_2'$ such that for every $k$
\bes
C_1' k^2\le \gamma^\lambda(\lambda,k)\le C_2' k^2 \,.
\ees
\end{proposition}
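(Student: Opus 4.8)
The plan is to prove the two inequalities separately, exploiting that $\rho=\lambda$ forces $\frac{\lambda}{\rho}=1$, so that both phases are simultaneously in equilibrium under a common rotation and the entire cost is localised at the interface.

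For the upper bound I would simply test with the identity map $v(x)=x$. Since $\L_1^-$ has equilibrium spacing $1$ while $\L_\lambda^+$ is the $\lambda$-rescaled lattice with equilibrium spacing $\lambda$ (recall the definition of $\L_\rho^+$), every bulk bond inside $\L_1^-$ has reference length $1$ and every bulk bond inside $\L_\lambda^+$ has reference length $\lambda$, each matching its own target in \eqref{energia-infinita}. Hence the identity produces \emph{exactly} zero bulk energy, and it is admissible in $\A_{\lambda,\infty}(\Om_{k,\infty})$ (with $R=I$ and any $M>0$), because $\det\D v=1>0$ and an affine orientation-preserving map carries convex polyhedra to convex polyhedra. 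The only surviving contributions come from the bonds crossing the interface; these form a single transverse layer of cross-section $k\times k$, so there are $O(k^2)$ of them and each contributes a bounded amount, giving $\ga^\lambda(\lambda,k)\le \E_\infty^\lambda(v,\lambda,k)\le C_2'\,k^2$.

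For the lower bound the decisive observation is an elementary bond-by-bond estimate that does not invoke rigidity at all. Reading off \eqref{energia-infinita}, an interfacial bond joining $a\in\L_1^-$ to $c\in\L_\lambda^+$ enters the first sum through the ordered pair $(a,c)$ with target $1$ and the second sum through $(c,a)$ with target $\lambda$, so its total contribution is $\tfrac12(\ell-1)^2+\tfrac12(\ell-\lambda)^2$, where $\ell:=|v(a)-v(c)|$. Minimising over $\ell\ge0$ yields the deformation-independent bound $\tfrac12(\ell-1)^2+\tfrac12(\ell-\lambda)^2\ge\tfrac{(1-\lambda)^2}{4}>0$: a single bond cannot satisfy the two incompatible targets $1$ and $\lambda$ at once. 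Summing over the interfacial bonds and using that there are at least $c\,k^2$ of them (each of the $\sim k^2$ atoms of $\L_1^-$ adjacent to the interface has at least one nearest neighbour in $\L_\lambda^+$) yields $\E_\infty^\lambda(v,\lambda,k)\ge\tfrac{(1-\lambda)^2}{4}\,c\,k^2$ for \emph{every} admissible $v$, whence $\ga^\lambda(\lambda,k)\ge C_1'\,k^2$.

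The only points requiring care---and the mild obstacle---are the uniform two-sided bond count $\Theta(k^2)$ and the unified treatment of the four crystals. The lower count follows by applying bounded coordination to the $\sim k^2$ interfacial atoms of $\L_1^-$, and the upper count likewise. For the body-centred cubic one replaces the targets $1,\lambda$ in \eqref{bcc-energy} by $\var^a_c,\lambda\var^a_c$; since $\var$ is bounded below by $\tfrac{\sqrt6}{2}>0$, the per-bond contribution remains bounded below by a positive constant. For the diamond cubic the interfacial next-to-nearest bonds lying inside a single sublattice $\L\dc{_i}$ already supply the incompatible pair of targets $1$ and $\lambda$, so the same estimate applies. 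Consequently both bounds hold for all four lattice structures, with constants depending only on $\lambda$ (and on $\min\var$ in the body-centred case).
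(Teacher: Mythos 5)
Your proposal is correct and follows essentially the same route as the paper, which proves the upper bound by testing with the identity map $v(x)=x$ and controlling the $O(k^2)$ interfacial bonds via the uniformly bounded coordination number, the lower bound coming from the unavoidable frustration $\tfrac12(\ell-1)^2+\tfrac12(\ell-\lambda)^2\ge\tfrac{(1-\lambda)^2}{4}$ of each of the $\gtrsim k^2$ bonds crossing the interface. The only (harmless) imprecision is the claim that $\var\ge\tfrac{\sqrt6}{2}$ everywhere on $\mathbb{S}^2$: the paper only prescribes $\var$ at the bulk bond directions, so for interfacial bonds one should invoke $\min_{\mathbb{S}^2}\var>0$ by compactness, as you in fact do at the end.
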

\begin{remark}
The proof of Proposition \ref{lowerbound} is a generalisation of its two-dimensional
counterpart (see  \cite[Proposition 2.5]{LaPaSc2014}) given the rigidity estimates of the previous section.
In contrast,
the proof of Proposition \ref{prop:lambda} is straightforward: it follows
by testing  $\E_{\infty}^\lambda$ on the identical deformation $v(x)=x$ and
taking into account that
each interfacial atom has a number of bonds that is uniformly bounded in $k$.
\end{remark}
\begin{remark}\label{finalrem}
Propositions \ref{lowerbound}--\ref{prop:lambda} in combination with Theorem \ref{thm3}
prove that dislocations
are energetically preferred if the thickness of the nanowire modeled by $k$ is sufficiently large.
\end{remark}
%
%
%
%
\section*{Acknowledgements} \noindent
This work was partially supported by the DFG grant SCHL 1706/2-1.
The research of G.L.\ was supported by the ERC grant No.\ 290888.

\end{document}